\newtheorem{theorem}{Theorem}[section]
\newtheorem{lemma}[theorem]{Lemma}
\newtheorem{corollary}[theorem]{Corollary}
\newtheorem{proposition}[theorem]{Proposition}
\newtheorem{definition}[theorem]{Definition}
\newtheorem{example}[theorem]{Example}
\newtheorem{remark}[theorem]{Remark}
\newtheorem{question}[theorem]{Question}
\journal{Journal of Combinatorial Theory, Series A}
\begin{document}

\begin{frontmatter}

%% Title, authors and addresses

%% use the tnoteref command within \title for footnotes;
%% use the tnotetext command for the associated footnote;
%% use the fnref command within \author or \address for footnotes;
%% use the fntext command for the associated footnote;
%% use the corref command within \author for corresponding author footnotes;
%% use the cortext command for the associated footnote;
%% use the ead command for the email address,
%% and the form \ead[url] for the home page:
%%
%% \title{Title\tnoteref{label1}}
%% \tnotetext[label1]{}
%% \author{Name\corref{cor1}\fnref{label2}}
%% \ead{email address}
%% \ead[url]{home page}
%% \fntext[label2]{}
%% \cortext[cor1]{}
%% \address{Address\fnref{label3}}
%% \fntext[label3]{}

\title{MacWilliams-type equivalence relations}

%% use optional labels to link authors explicitly to addresses:
%% \author[label1,label2]{<author name>}
%% \address[label1]{<address>}
%% \address[label2]{<address>}

\author{Soohak Choi}
%\author{Soohak Choi\fnref{Choi}}
\address{Institute of Mathematical Sciences,
Ewha Womans University, Seoul
120-750, Republic of Korea}
\ead{misb@postech.ac.kr}
%\fntext[Choi]{ }

\author{Jong Yoon Hyun\fnref{Hyun}}
\address{Institute of Mathematical Sciences,
Ewha Womans University, Seoul
120-750, Republic of Korea}
\ead{hyun33@postech.ac.kr}
\fntext[Hyun]{%Tel \#: +82-2-3277-6971.\\
The work of Jong Yoon Hyun was supported by the
National Research Foundation of Korea (NRF) grant funded by the Korea government (MEST) (grant \# 2011-0010328).}

\author{Dong Yeol Oh}
%\author{Dong Yeol Oh\fnref{Oh}}
\address{Division of Liberal Arts, Hanbat National University, Daejeon 305-719, Republic of Korea}
\ead{dongyeol70@gmail.com}
%\fntext[Oh]{Tel \#: +82-42-821-1765.\\
%The work of Dong Yeol Oh was supported by Basic Research Program through the
%National Research Foundation of Korea (NRF) funded by the Ministry of Education,
%Science and Technology (grant \# ??).}

\author{Hyun Kwang Kim\corref{Kim2}\fnref{Kim}}
\address{Department of Mathematics, POSTECH,
Pohang 790-784, Republic of Korea}
\ead{hkkim@postech.ac.kr}
\cortext[Kim2]{Corresponding author}
\fntext[Kim]{%Tel \#: +82-54-279-2049, Fax \#: +82-54-279-2799.\\
The work of Hyun Kwang Kim was supported by Basic Research Program through the
National Research Foundation of Korea (NRF) funded by the Ministry of Education,
Science and Technology (grant \# 2010-0026473 and 2012-047640).}

\begin{abstract}
%% Text of abstract
Let $\mathcal{P}$ be a poset on $[n]$, $\mathcal{I}(\mathcal{P})$
the set of order ideals of $\mathcal{P}$ and $E$ an equivalence
relation on $\mathcal{I}(\mathcal{P})$. The concepts of the dual
relation $E^*$ of an equivalence relation $E$, the $E$-weight (resp.
$E^*$-weight) distribution of a linear poset code (resp. its dual
poset code) and a MacWilliams-type equivalence relation are
introduced. We give a characterization for a MacWilliams-type
equivalence relation in terms of MacWilliams-type identities for a
linear poset code. Three kinds of equivalence relations on
$\mathcal{I}(\mathcal{P})$ which are of MacWilliams-type are found,
i.e., $(i)$ we show that every equivalence relation defined by the
automorphism of $\mathcal{P}$ is a MacWilliams-type; $(ii)$ we
provide a new characterization for poset structures when the
equivalence relation defined by the same cardinality on
$\mathcal{I}(\mathcal{P})$ becomes a MacWilliams-type; $(iii)$ we
also give necessary and sufficient conditions for poset structures
in which the equivalence relation defined by the order-isomorphism
on $\mathcal{I}(\mathcal{P})$ is a MacWilliams-type.
\end{abstract}

\begin{keyword}
%% keywords here, in the form: keyword \sep keyword
MacWilliams identity \sep poset codes \sep $\mathcal{P}$-weight distribution
%% MSC codes here, in the form: \MSC code \sep code
\MSC[2010] 94B05
%% or \MSC[2008] code \sep code (2000 is the default)
\end{keyword}

\end{frontmatter}

%%
%% Start line numbering here if you want
%%
% \linenumbers

%% main text
\section{Introduction}

Let $\mathbb{F}_q^n$ be the vector space of $n$-tuples over a finite field $\mathbb{F}_q$.
%Coding theory may be considered as the study of $\mathbb{F}_q^n$ when $\mathbb{F}_q^n$
%is endowed with the Hamming metric.
The space $\mathbb{F}_q^n$ endowed with the Hamming metric is called
the Hamming space. Coding theory may be considered as the study of
the Hamming space. There are several possible metrics that can be
defined on $\mathbb{F}_q^n$ \cite{BGL,G,N1,R,RT}. The ordered
Hamming space was first introduced by Niederreiter \cite{N1} to
study uniform distributions of points in the unit cube, and
developed by Rosenbloom and Tsfasman \cite{RT}; so the order
distance in the ordered Hamming space is sometimes called the
NRT-distance. The ordered Hamming space was further generalized by
Brualdi et al. \cite{BGL} to poset spaces on $\mathbb{F}_q^n$ by
assigning the coordinate positions of $\mathbb{F}_q^n$ to arbitrary
partially ordered sets.
%The space $\mathbb{F}_q^n$ endowed with poset metrics are called poset spaces.
The Hamming space and ordered Hamming space are special cases of
poset spaces given by anti-chain and the disjoint union of chains
with the same length, respectively. The poset spaces have been
extensively studied; for instances, the MacWilliams-type identity
\cite{BP,DS,K1,KO,MS,O,Skr}, perfect poset-codes \cite{HK2,KK}, the
group of (linear) isometries of the full space \cite{H,PFKH}, and
(near) MDS poset codes \cite{BP1,HK}.

One of the most fundamental results in coding theory is the MacWilliams
identity on the Hamming space which states that the Hamming weight
enumerator of a linear code is uniquely determined by that of its dual code.
The MacWilliams identity is contributed to find the maximal subsets of $\mathbb{F}_q^n$ with the given minimum Hamming distance.

There are a number of attempts to derive the MacWilliams-type
identity on $\mathbb{F}_q^n$ endowed with poset metrics; for
instances, the ordered Hamming space \cite{BP,DS,K2,MaS} and more
generally poset space \cite{K1, KO, O, Skr}. Skriganov \cite{Skr}
derived the MacWillams-type identity on chains with respect a poset
weight enumerator. Martin and Stinson \cite{MaS}, and Dougherty and
Skriganov \cite{DS} derived in different ways the MacWilliams-type
identity on ordered Hamming spaces with respect to a shape
enumerator. Kim and Oh \cite{KO} classified all poset structures
which admit the MacWilliams-type identity on poset spaces and
derived the MacWillams-type identity to such posets with respect a
poset weight enumerator.
%the shape enumerators between a linear code and its dual code when the posets are a finite number of disjoint chains of equal size.
%Applying the results of Martin and Stinson \cite{MaS}, Dougherty and Skriganov \cite{DS}, and Kim and Oh \cite{KO} to an anti-chain, we deduce the MacWilliams identity on Hamming spaces.

%The MacWilliams-type identities on poset spaces were derived in two different points of view: the most common ones are poset weight enumerator \cite{K1, KO, Skr} and shape enumerator
%\cite{BP, DS, MaS} (defined in Section 2). These two different MacWilliams-type identities are naturally generalizations of the MacWilliams identity on Hamming space. The reason is this:
%The following are equivalent on Hamming space: for any two vectors $u$ and $v$ of $\mathbb{F}_q^n$,
%\begin{enumerate}
%%\item $u$ and $v$ have the same Hamming weight,
%\item $ \mbox{supp}(u) $  and $ \mbox{supp}(v) $ have the same cardinality, i.e., the same Hamming weight.
%\item $ \mbox{supp}(u) $  and $ \mbox{supp}(v) $ are isomorphic as posets, i.e., have the same shape.
%\end{enumerate}
%Hence, the weight enumerator and shape enumerator on Hamming space are the same. Unfortunately, these properties above are not equivalent in poset metric spaces, so MacWilliams-type identities on poset spaces
%with respect to  poset weight enumerator and shape enumerator are different.
%
%Since poset metrics were introduced, several MacWilliams-type identities were developed.
%From the point of view of (1),

The preceding discussions lead us to the following natural question:

\begin{question}
Is there a unifying way for the known results of MacWilliams-type
identities on poset spaces?
\end{question}

%To settle Question 1.1, we define the dual relation $E^*$ ? from $E$
%the equivalence relation on the set of order ideals of a poset.
%Using these $E$-weight distribution defined on $E$ and $E^*$-weight
%distribution defined on $E^*$ are introduced ?. A poset weight
%distribution ? and a shape distribution ? of a poset code can be
%derived by taking particular equivalence relations. By $E$ a
%MacWilliams-type equivalence relation we mean that for any linear
%poset code $\mathcal{C}$, the $E$-weight distribution of $C$ is
%uniquely determined by the $E^*$-weight distribution of its
%dual-poset code $C^{\perp}$ and vice versa. Under the condition that
%an equivalence relation $E$ is a MacWilliams-type, we establish the
%MacWilliams-type identity in terms of the $E$-weight distribution
%and the $E^*$-weight distribution which is explicitly expressed by
%the matrix form. We also provide three kinds of equivalence
%relations to be MacWilliams-types, that is, equivalence relations
%defined by the cardinality on the set of order ideals of a poset,
%the automorphism of a poset and a complement isomorphism poset.

The paper is organized to settle Question 1.1 as follows. In Section
2, we introduce some basic concepts and notations on poset codes;
the dual relation $E^*$ of an equivalence relation $E$ (Definition
2.1), the $E$-weight (resp. $E^*$-weight) distribution of a poset
code (resp. its dual poset) and a MacWilliams-type equivalence
relation (Definition 2.7). In Section 3, we give necessary and
sufficient conditions for an equivalence relation to be a
MacWilliams-type equivalence relation (Theorem 3.3). We also derive
the connection between the $E$-weight distribution of a linear poset
code and the $E^*$-weight distribution of its dual poset code,
called the MacWilliams-type identity. It is in the matrix form whose
entries are explicitly formulated (Proposition 3.8). In Section 4,
we find equivalence relations of MacWilliams-type (Theorem 4.1),
that is, $(i)$ we show that every equivalence relation defined by
the automorphism of a poset is a MacWilliams-type; $(ii)$ we provide
a new characterization for poset structures established in \cite{KO}
with the equivalence relation defined by the cardinality on the set
of order ideals of a poset; $(iii)$ we show that every equivalence
relation defined by the order isomorphism on the set of order ideals
of a complement isomorphism poset is a MacWilliams-type and vice
versa.

%*******************************************************************
\section{Preliminaries: Notations and concepts}
%*******************************************************************

In this section, we review on basic definitions and notations for
poset spaces, and then define a MacWilliams-type equivalence
relation on a poset space.

Let $\mathcal{P}$ be a poset on $[n]:= \{1, 2, \ldots, n \}$ with a
partial order $\preceq$. An anti-chain is a poset whose any two
elements are incomparable. A chain is a poset whose any two elements
are comparable. A subset $I$ of $\mathcal{P}$ is an order ideal if
$a\in I$ and $b\preceq a$, then $b\in I$. Given a nonempty subset
$X$ of $[n]$, we denote $\langle X \rangle_{\mathcal{P}}$ the
smallest order ideal containing $X$.

Let $\mathcal{I}(\mathcal{P})$ denote the set of order ideals of
$\mathcal{P}$ and let $E$ be an equivalence relation on
$\mathcal{I}(\mathcal{P})$. Define the dual poset $\mathcal{P^*}$ of
$\mathcal{P}$ as follows;
%\begin{eqnarray*}
%\left\{\begin{array}{ll}
%\mathcal{P} \textrm{ and } \mathcal{P^*} \textrm{ have the same underlying set}\\
%x \preceq y \textrm{ in }\mathcal{P} \textrm{ if and only if } y \preceq x \textrm{ in } \mathcal{P^*}.
%\end{array}\right.
%\end{eqnarray*}
$\mathcal{P}$ and $\mathcal{P^*}$ have the same underlying set and
$x \preceq y$ in $\mathcal{P}$ if and only if $y \preceq x$ in
$\mathcal{P^*}$.
%The poset $\mathcal{P^*}$ is called the dual poset of $\mathcal{P}$.
It is obvious that the complement $I^c$ of $I$ in
$\mathcal{I}(\mathcal{P})$ is also an order ideal of
$\mathcal{P^*}$. Thus there is a one-to-one correspondence between
$\mathcal{I}(\mathcal{P})$ and $\mathcal{I}({\mathcal{P^*}})$. We
denote $\overline{I}$ (resp. $\overline{I^c}$) the equivalence class
of $\mathcal{I}(\mathcal{P})$ (resp. $\mathcal{I}(\mathcal{P^*})$)
containing $I$ (resp. $I^c$) with respect to $E$ (resp. $E^*$).

By $M(I)$ and $I_M$ for $I \in \mathcal{I}(\mathcal{P})$ we mean the
set of maximal elements of $I$ and nonmaximal elements of $I$,
respectively. It is obvious that $I_M$ is also an order ideal of
$\mathcal{P}$.

A permutation $\sigma$ of $\mathcal{P}$ is called an automorphism if
$\sigma$ and $\sigma^{-1}$ preserves the order relation of
$\mathcal{P}$, i.e. $x\preceq y$ if and only if $\sigma(x)\preceq
\sigma(y)$ for all $x$, $y$ in $\mathcal{P}$. It is easy to see that
the set $\textrm{Aut}(\mathcal{P})$ of all automorphisms of
$\mathcal{P}$ forms a group which is called the automorphism group
of $\mathcal{P}$.

The support supp$(x)$ and $\mathcal{P}$-weight $w_{\mathcal{P}}(x)$
of a vector $x$ in $\mathbb{F}_q^n$ are defined as
$$
\mbox{supp}(x) = \{i \mid x_i \neq 0\}\mbox{ and }w_{\mathcal{P}}(x) = |{\langle\mbox{supp}(x)\rangle}_{\mathcal{P}}|.
$$
The $\mathcal{P}$-distance between $x$ and $y$ in $\mathbb{F}_q^n$ is defined as
$$
d_{\mathcal{P}}(x, y) = w_{\mathcal{P}}(x - y).
$$
It is known \cite{BGL} that $d_{\mathcal{P}}$ is a metric on
$\mathbb{F}_q^n$, called a poset metric or a $\mathcal{P}$-metric.
If $\mathbb{F}_q^n$ is endowed with the $\mathcal{P}$-metric, then a
$($linear$)$ code of $\mathbb{F}_q^n$ is called a $($linear$)$
$\mathcal{P}$-code.\\

The following definition plays an important role for deriving the
MacWilliams-type identity.
\begin{definition}\label{definition2}
Let $\mathcal{P}$ be a poset on $[n]$ and $E$ an equivalence
relation on $\mathcal{I}(\mathcal{P})$. We say that $E^*$ is the
dual relation on $\mathcal{I}(\mathcal{P^*})$ of $E$ if it is
satisfied the following property: If $(I,J)\in E$ is defined by
property $(A)$ on $\mathcal{I}(\mathcal{P})$, then $(I^c,J^c)\in
E^*$ is also defined by property $(A)$ on
$\mathcal{I}(\mathcal{P^*})$.
%\begin{eqnarray*}
%(I^c ,J^c) \in E^* \textrm{ if and only if } (I,J) \in E.
%\end{eqnarray*}
%We say that $\mathcal{P}$ has the dual relation $E^*$ on
%$\mathcal{I}(\mathcal{P^*})$ with respect to an equivalence relation
%$E$ on $\mathcal{I}(\mathcal{P})$ if
\end{definition}
Definition \ref{definition2} is well-defined because $E^*$ is an
equivalence relation on $\mathcal{I}(\mathcal{P^*})$ and $E^{**}=E$. \\

We now introduce three kinds of equivalence relations on the set of
order ideals of a poset. Two of them induce naturally the dual
relation but the other does not. See Examples 2.3 and 2.5.

\begin{lemma}
Let $\mathcal{P}$ be a poset on $[n]$ and $I,J$ in $\mathcal{I}(\mathcal{P})$.\\
$(i)$ The relation $E_C$ on $\mathcal{I}(\mathcal{P})$ is defined by
the rule
\[
(I,J) \in E_C \mbox{ if and only if } |I| = |J|,
\]
Then $E_C$ is an equivalence relation on $\mathcal{I}(\mathcal{P})$
and the dual relation $E_C^*$ on $\mathcal{I}(\mathcal{P}^*)$ of
$E_C$ is
automatically determined by $|I^c|=|J^c|$. \\
$(ii)$ Let $H$ be a subgroup of $\textrm{Aut}(\mathcal{P})$. The
relation $E_{H}$ on $\mathcal{I}(\mathcal{P})$ is defined by the
rule
\begin{eqnarray*}
(I,J) \in E_{H} \textrm{ if and only if } \sigma(I) = J \textrm{ for
some } \sigma \in H.
\end{eqnarray*}
Then $E_{H}$ is an equivalence relation on
$\mathcal{I}(\mathcal{P})$ and the dual relation $E_H^*$ on
$\mathcal{I}(\mathcal{P^*})$ of $E_{H}$ is
automatically determined by $\sigma(I^c)=J^c$.\\
$(iii)$ The relation $E_S$ on $\mathcal{I}(\mathcal{P})$ is defined
by the rule
\[
(I,J) \in E_S \mbox{ if and only if } I \simeq J \mbox{ as a poset}.
\]
Then $E_S$ is an equivalence relation on $\mathcal{I}(\mathcal{P})$.
\end{lemma}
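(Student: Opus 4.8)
The plan is to verify, in each of the three cases, the three axioms of an equivalence relation (reflexivity, symmetry, transitivity), and then --- for parts $(i)$ and $(ii)$ --- to check that the rule proposed for the dual relation is exactly the one prescribed by Definition \ref{definition2}, together with the well-definedness requirements ($E^{\ast}$ an equivalence relation and $E^{\ast\ast}=E$) recorded just after it. None of these steps is deep; the only point I regard as needing genuine care is the compatibility of the defining property with complementation in $(ii)$.

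For $(i)$: reflexivity, symmetry and transitivity of $E_C$ are immediate, being inherited from the equality relation on $\{0,1,\dots,n\}$ applied to the cardinalities $|I|$. For the dual relation, the property $(A)$ defining $E_C$ is ``$|I|=|J|$''; reading it on $\mathcal{I}(\mathcal{P}^{\ast})$ for the complements gives exactly ``$|I^c|=|J^c|$'', which is the stated rule. Since $|I|+|I^c|=n=|J|+|J^c|$, we have $(I,J)\in E_C$ if and only if $(I^c,J^c)\in E_C^{\ast}$; because $I\mapsto I^c$ is a bijection $\mathcal{I}(\mathcal{P})\to\mathcal{I}(\mathcal{P}^{\ast})$, this transports $E_C$ to an equivalence relation $E_C^{\ast}$ and shows $E_C^{\ast\ast}=E_C$, so the recipe of Definition \ref{definition2} is consistent here.

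For $(ii)$: I would first record two elementary facts. A permutation $\sigma$ of $[n]$ preserves $\preceq$ on $\mathcal{P}$ together with its inverse if and only if it does so for the reversed order, hence $\mathrm{Aut}(\mathcal{P})=\mathrm{Aut}(\mathcal{P}^{\ast})$ and $H$ is also a subgroup of $\mathrm{Aut}(\mathcal{P}^{\ast})$; and for any $I\in\mathcal{I}(\mathcal{P})$ and $\sigma\in\mathrm{Aut}(\mathcal{P})$ one has $\sigma(I)\in\mathcal{I}(\mathcal{P})$, so $E_H$ really is a relation on $\mathcal{I}(\mathcal{P})$. Reflexivity of $E_H$ uses $\mathrm{id}\in H$; symmetry uses $\sigma(I)=J\Rightarrow\sigma^{-1}(J)=I$ with $\sigma^{-1}\in H$; transitivity uses $\sigma(I)=J$, $\tau(J)=K\Rightarrow(\tau\sigma)(I)=K$ with $\tau\sigma\in H$. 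The defining property $(A)$ is ``there is $\sigma\in H$ with $\sigma(I)=J$''; read on $\mathcal{I}(\mathcal{P}^{\ast})$ for complements it becomes ``there is $\sigma\in H$ with $\sigma(I^c)=J^c$'', the stated rule. Since a permutation commutes with complementation, $\sigma(I^c)=\sigma(I)^c$, so $\sigma(I)=J$ if and only if $\sigma(I^c)=J^c$; hence $(I,J)\in E_H\iff(I^c,J^c)\in E_H^{\ast}$, which again gives that $E_H^{\ast}$ is an equivalence relation and $E_H^{\ast\ast}=E_H$. This verification that ``property $(A)$'' survives the passage to $\mathcal{P}^{\ast}$ --- i.e. that $H\le\mathrm{Aut}(\mathcal{P})$ still acts on $\mathcal{I}(\mathcal{P}^{\ast})$ compatibly with complements --- is the one place where a little thought is required; everything else is bookkeeping.

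For $(iii)$: reflexivity, symmetry and transitivity of $E_S$ are exactly the statements that the identity map of a poset is an order isomorphism, that the inverse of an order isomorphism is an order isomorphism, and that a composite of order isomorphisms is an order isomorphism, where each ideal carries the order induced from $\mathcal{P}$; all three are standard. No claim about a dual relation is made here, and indeed the defining property ``$I\simeq J$ as posets'' is intrinsic to the pair and is not phrased through complementation, so the recipe of Definition \ref{definition2} need not produce a well-defined $E_S^{\ast}$ --- this is the asymmetry noted before the lemma and examined in Examples 2.3 and 2.5.
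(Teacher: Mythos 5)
Your proof is correct; the paper itself dismisses this lemma with ``The proofs are straightforward,'' and your verification of the equivalence-relation axioms and of the compatibility of each defining property with complementation (via $|I|+|I^c|=n$ and $\sigma(I^c)=\sigma(I)^c$) is exactly the routine argument being omitted. Nothing further is needed.
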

\begin{proof}
The proofs are straightforward.
\end{proof}
%For the sake of understanding, we give some examples.

\begin{example}\label{example1}
Let $\mathcal{P}$ be a poset on $[5]$ with the order relation: $1
\prec 2 \prec 3$ and $4 \prec 5$.
\begin{figure}[ht]
$$\includegraphics[scale=0.5]{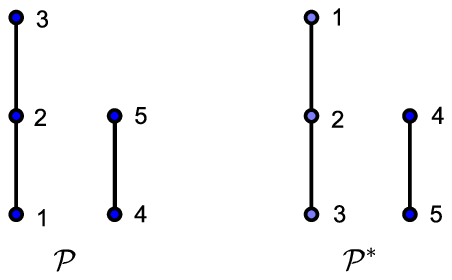}$$
\end{figure}
We see that the set $\mathcal{I}(\mathcal{P})$ becomes
\[\left\{
\emptyset, \{1\}, \{4\}, \{1,2\}, \{1,4\}, \{4,5\}, \{1,2,3\},
\{1,2,4\}, \{1,4,5\}, \{1,2,3,4\}, \{1,2,4,5\}, \mathcal{P}
\right\}.
\]
%\begin{multline*}
%\mathcal{I}(\mathcal{P^*}) \\= \{ \emptyset^c, \{1\}^c, \{4\}^c, \{1,2\}^c, \{1,4\}^c, \{4,5\}^c, \{1,2,3\}^c, \{1,2,4\}^c, \{1,4,5\}^c, \{1,2,3,4\}^c, \{1,2,4,5\}^c, \{1,2,3,4,5\}^c \},
%\end{multline*}
So,
$$\mathcal{I}(\mathcal{P})/E_C = \{ \overline{\emptyset}, \overline{\{1\}}, \overline{\{1,2\}}, \overline{\{1,2,3\}}, \overline{\{1,2,3,4\}}, \overline{\mathcal{P}} \},
$$
$$
\mathcal{I}(\mathcal{P^*})/E_C^* = \{ \overline{\emptyset^c},
\overline{\{1\}^c}, \overline{\{1,2\}^c}, \overline{\{1,2,3\}^c},
\overline{\{1,2,3,4\}^c}, \overline{\mathcal{P}^c} \}.
$$
Notice here that $\overline{\{1\}}=\{\{1\},\{4\}\},$
$\overline{\{1,2\}}=\{\{1,2\},\{1,4\},\{4,5\}\},$
$\overline{\{1,2,3\}}=\{\{1,2,3\},\{1,2,4\},\{1,4,5\}\},$ and
$\overline{\{1,2,3,4\}}=\{\{1,2,3,4\},\{1,2,4,5\}\}$. On the other
hand, we have
$$
\mathcal{I}(\mathcal{P})/E_S = \{ \overline{\emptyset}, \overline{\{1\}}, \overline{\{1,2\}}, \overline{\{1,4\}}, \overline{\{1,2,3\}}, \overline{\{1,2,4\}},  \overline{\{1,2,3,4\}}, \overline{\{1,2,4,5\}}, \overline{\mathcal{P}} \},
$$
$$
\mathcal{I}(\mathcal{P^*})/E_S^* \\= \{ \overline{\emptyset^c}, \overline{\{1\}^c}, \overline{\{1,2\}^c}, \overline{\{1,4\}^c}, \overline{\{1,2,3\}^c}, \overline{\{1,2,4\}^c}, \overline{\{1,2,3,4\}^c}, \overline{\{1,2,4,5\}^c}, \overline{\mathcal{P}^c} \},
$$
where $ \overline{\{1\}}=\{\{1\},\{4\}\},$
$\overline{\{1,2\}}=\{\{1,2\}, \{4,5\}\},$ and
$\overline{\{1,2,4\}}=\{\{1,2,4\},\{1,4,5\}\}$. We notice that in
this dual relation $E^*_S$ on $\mathcal{I}(\mathcal{P^*})$, every
element $(I^c, J^c)$ in $E_S^*$ is not defined by $I^c \simeq J^c$
as a poset in $\mathcal{P^*}$ because $(\{1,2\}^c,\{4,5\}^c) \in
E^*_S$, but $\{1,2\}^c$ and $\{4,5\}^c$ are not isomorphic as a
poset in $\mathcal{P^*}$. Thus the dual relation $E^*_S$ on
$\mathcal{I}(\mathcal{P^*})$ does not exist for this poset.
\end{example}
Motivated by Example 2.3, we modify the relation $E_S$ to give the
following definition.
\begin{definition}
A poset $\mathcal{P}$ is a complement isomorphism poset if the
following condition holds: for any $I$ and $J$ in
$\mathcal{I}(\mathcal{P})$,
\[
I\simeq J \mbox{ if and only if }I^c\simeq J^c.
\]
\end{definition}
An example of complement isomorphism posets is given in Figure 1.
\begin{figure}[ht]
$$\includegraphics[scale=0.5]{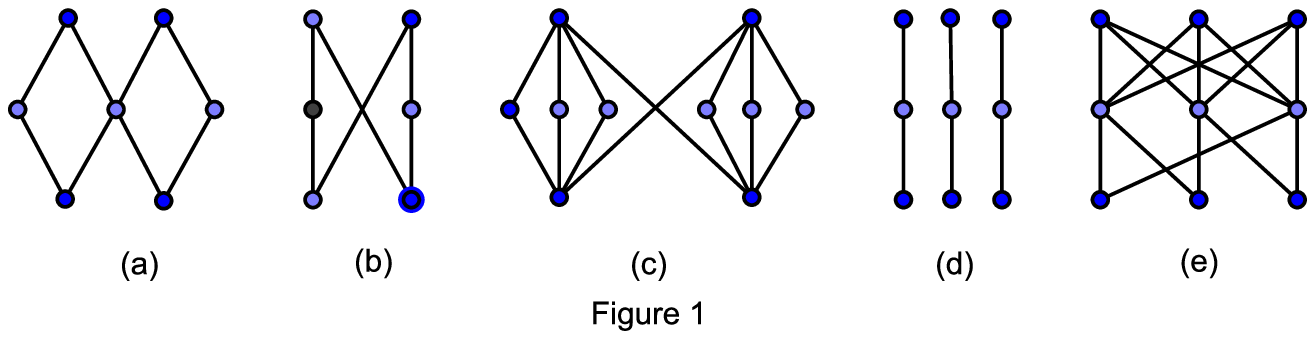}$$
\end{figure}
\begin{example}\label{example3}
Let $\mathcal{P}$ be a poset on $[4]$ with order relation: $1 \prec
3$ and $2 \prec 4$. Then $\textrm{Aut}(\mathcal{P}) = \{(1),
(12)(34)\}$. We see that
$$
\mathcal{I}(\mathcal{P})/E_{\textrm{Aut}(\mathcal{P})} = \{ \overline{\emptyset}, \overline{\{1\}}, \overline{\{1,2\}}, \overline{\{1,3\}}, \overline{\{1,2,3\}}, \overline{\mathcal{P}} \},
$$
$$
\mathcal{I}(\mathcal{P^*})/E_{\textrm{Aut}(\mathcal{P})}^* = \{ \overline{\emptyset^c}, \overline{\{1\}^c}, \overline{\{1,2\}^c}, \overline{\{1,3\}^c}, \overline{\{1,2,3\}^c}, \overline{\mathcal{P}^c}\},
$$
where $\overline{\{1\}}=\{\{1\},\{2\}\}$, $\overline{\{1,2\}}=\{\{1,2\}\}$, $\overline{\{1,3\}} = \{\{1,3\}, \{2,4\}\}$, $\overline{\{1,2,3\}}=\{\{1,2,3\},\{1,2,4\}\}$, and $\overline{\mathcal{P}} = \{\mathcal{P}\}$.
In this dual relation $E^*_{\textrm{Aut}(\mathcal{P})}$ on $\mathcal{I}(\mathcal{P^*})$, every element $(I^c, J^c)$ in $E_{\textrm{Aut}(\mathcal{P})}^*$ is also automatically determined by $\sigma(I^c) = J^c$ for some $\sigma\in Aut(\mathcal{P^*})$.
%Then $E_{Aut(\mathcal{P})}$ is a MacWilliams-type equivalence relation on $\mathcal{I}(\mathcal{P})$,which is shown in Section 4. %It should be mentioned that $E_{Aut(\mathcal{P})}=E_S$ in this case.
\end{example}
%\begin{example}
%For a poset $\mathcal{P}$ on $[n]$, define the relation $E_S$ (resp.
%$E_S'$) on $\mathcal{I}(\mathcal{P})$ (resp.
%$\mathcal{I}(\mathcal{P^*})$) by
%\begin{eqnarray*}
%(I,J) \in E_S \ \textrm{ if and only if } I \simeq J \ \textrm{ as a poset on } \mathcal{P}, \\
%(I^c,J^c) \in E_S'  \textrm{ if and only if }  I^c \simeq J^c \
%\textrm{ as a poset on } \mathcal{P^*},
%\end{eqnarray*}
%where $I$ and $J$ are in $\mathcal{I}(\mathcal{P})$. In general, we
%have $E_S^* \neq E_S'$ on $\mathcal{I}(\mathcal{P^*})$. %for an
%%arbitrary poset $\mathcal{P}$ (see Example \ref{example1}).
%
%Let $\mathcal{P}$ be a poset defined in Example $\ref{example1}$.\\
%%\begin{figure}[ht]
%%$$\includegraphics[scale=0.6]{poset3.eps}$$
%%\end{figure}
%$(1)$ Put $G = Aut(\mathcal{P})$. It follows from $Aut(\mathcal{P}) = \{ 1_{\mathcal{P}} \}$
%that $E_G = \{(I, I) \mid I \in \mathcal{I}(\mathcal{P}) \}$.
%Since $(\{1\}, \{4\}) \in E_S$ and $(I, I) \in E_S$ for $I \in \mathcal{I}(\mathcal{P})$,
%we have $E_G \subsetneq E_S$. Since $(\{1,2\}, \{1,4\}) \notin E_S$
%and $|I| = |J|$ for $(I, J) \in E_S$, we have $E_S \subsetneq E_C$.\\
% %
%%Then, $Aut(\mathcal{P}) = \{ 1_{\mathcal{P}} \}$, thus $E_G = \{(I, I) \mid I \in \mathcal{I}(\mathcal{P}) \}$.
%%Since $I_1 = \{1\}$ and $I_2 = \{3\}$ are isomorphic, $(I_1, I_2) \in E_S \setminus E_G$.
%%Furthermore, $I_3 = \{1,2\}$ and $I_4 = \{1,3\}$ are not isomorphic but have the same cardinality, so $(I_3,I_4) \in E_C \setminus E_S$.
%$(2)$ Since $(\{1\}, \{4\}) \in E_S$ and $(\{1\}^c, \{4\}^c) \notin
%E'_S$, we have $E_S^* \neq E_S'$.
%\end{example}

\begin{remark}
Let $\mathcal{P}$ be a poset on $[n]$. Then\\
$(i)$ $E_{Aut(\mathcal{P})} \subseteq E_S \subseteq E_C$.\\
$(ii)$ If $P$ is hierarchical (ordinal sum of anti chains), then the equalities hold.\\
$(iii)$ The equalities do not hold in general.\\
To see $(iii)$, let $\mathcal{P}$ be the poset defined in Example
$\ref{example1}$.
%\begin{figure}[ht]
%$$\includegraphics[scale=0.6]{poset3.eps}$$
%\end{figure}
Put $G = Aut(\mathcal{P})$. It follows from $Aut(\mathcal{P}) = \{
1_{\mathcal{P}} \}$ that $E_G = \{(I, I) \mid I \in
\mathcal{I}(\mathcal{P}) \}$. Since $(\{1\}, \{4\}) \in E_S$ and
$(I, I) \in E_S$ for $I \in \mathcal{I}(\mathcal{P})$, we have $E_G
\subsetneq E_S$. Since $(\{1,2\}, \{1,4\}) \notin E_S$ and $|I| =
|J|$ for $(I, J) \in E_S$, we have $E_S \subsetneq E_C$.
\end{remark}

Let $I$ be an order ideal of a poset $\mathcal{P}$ on $[n]$. We
define the $I$-sphere $S_I(x)$ and the $I^c$-sphere $S_{I^c}(x)$ of
$\mathbb{F}_q^n$ centered at $x$ in $\mathbb{F}_q^n$ as follows:
%\begin{eqnarray*}
%   S_{I}(x) = \{ y \in \mathbb{F}_q^n \mid \langle \mbox{supp}(x-y) \rangle_{\mathcal{P}} = I\} \textrm{ and }
%S_{I^c}(x) = \{ y \in \mathbb{F}_q^n \mid \langle \mbox{supp}(x-y) \rangle_{\mathcal{P^*}} = I^c \}.
%\end{eqnarray*}
$$
S_{I}(x) = \{ y \in \mathbb{F}_q^n \mid \langle \mbox{supp}(x-y) \rangle_{\mathcal{P}} = I\},
$$
$$
S_{I^c}(x) = \{ y \in \mathbb{F}_q^n \mid \langle \mbox{supp}(x-y) \rangle_{\mathcal{P^*}} = I^c \}.
$$
We also define the $\overline{I}$-sphere $S_{\overline{I},E}(x)$ and
$\overline{I^c}$-sphere $S_{\overline{I^c}, E^*}$ centered at $x$
with respect to $E$ and $E^*$ as follows:
$$
S_{\overline{I},E}(x) = \{ y \in \mathbb{F}_q^n \mid (\langle \mbox{supp}(x-y) \rangle_{\mathcal{P}}, I) \in E\},
$$
$$
S_{\overline{I^c},E^*}(x) = \{ y \in \mathbb{F}_q^n \mid (\langle \mbox{supp}(x-y) \rangle_{\mathcal{P^*}}, I^c) \in E^* \}.
$$
One can easily verify that
\begin{eqnarray*}
S_{\overline{I},E} (x) = \bigcup_{J \in \overline{I}}^{\circ} S_{J} (x)
\textrm{ and } S_{\overline{I^c},E^*} (x) = \bigcup_{J^c \in \overline{I^c}}^{\circ} S_{J^c} (x),
\end{eqnarray*}
where the union is disjoint. For the sake of simplicity, we will
write $S_{I}$ and $S_{I^c}$ (resp. $S_{\overline{I},E}$ and
$S_{\overline{I^c},E^*}$) instead of $S_{I}(\mathbf{0})$ and
$S_{I^c}(\mathbf{0})$ (resp. $S_{\overline{I},E}(\mathbf{0})$ and
$S_{\overline{I^c},E^*}(\mathbf{0})$), where $\mathbf{0}$ is the
zero vector.

Let $\mathcal{C}$ be a $\mathcal{P}$-code in $\mathbb{F}_q^n$. We
define
\begin{eqnarray*}
A_{\overline{I},E} (\mathcal{C}) := |S_{\overline{I},E} \cap
\mathcal{C} | = \sum\limits_{J \in \overline{I}} |S_{J} \cap
\mathcal{C} | \ \mbox{and} \ W(\mathcal{C}, \mathcal{P},E) := [
A_{\overline{I},E} (\mathcal{C}) ]_{\overline{I} \in
\mathcal{I}(\mathcal{P}) / E}.
\end{eqnarray*}

We call $W(\mathcal{C}, \mathcal{P},E)$ the weight distribution of
$\mathcal{C}$ with respect to $E$ (or the $E$-weight distribution of
$\mathcal{C}$). In particular, if $\mathcal{P}$ is an anti-chain on
$[n]$, then $S_{\overline{I}, E_{C}}$ $($resp. $S_{\overline{I^c},
E_{C}^*})$ is the set of vectors of $\mathbb{F}_q^n$ of Hamming
weight $|I|$ $($resp. $n - |I|)$, and the $E_C$-weight distribution
$W(\mathcal{C}, \mathcal{P},E_C)$ of $\mathcal{C}$ is just the
Hamming weight distribution of $\mathcal{C}$.

%If $E$ $($resp. $E')$ is an equivalence relation on $\mathcal{I}(\mathcal{P})$ $($resp. $\mathcal{I}(\mathcal{P^*}))$ given by the same property $(A)$, then the dual relation $E^*$ on $\mathcal{I}(\mathcal{P^*})$ can be different from $E'$:
\begin{definition}\label{definition}
Let $\mathcal{P}$ be a poset on $[n]$, $E$ an equivalence relation
on $\mathcal{I}(\mathcal{P})$ and $E^*$ the dual relation on
$\mathcal{I}(\mathcal{P^*})$ of $E$. An equivalence relation $E$ on
$\mathcal{I}(\mathcal{P})$ is a MacWilliams-type if for any linear
$\mathcal{P}$-codes $\mathcal{C}_1$ and $\mathcal{C}_2$ in
$\mathbb{F}_q^n$,
\begin{eqnarray*}
W(\mathcal{C}_1, \mathcal{P},E) =  W(\mathcal{C}_2, \mathcal{P},E)
\mbox{ implies } W(\mathcal{C}_1^{\perp}, \mathcal{P^*},E^*) =
W(\mathcal{C}_2^{\perp}, \mathcal{P^*},E^*).
\end{eqnarray*}
\end{definition}

We notice that Definition \ref{definition} is well-defined because $E$ is a MacWilliams-type equivalence relation on $\mathcal{I}(\mathcal{P})$ if and only if
$E^*$ is a MacWilliams-type equivalence relation on $\mathcal{I}(\mathcal{P^*})$ using the fact that $E^{**}=E$.

\section{Equivalent conditions for a MacWilliams-type equivalence relation}

In this section, we give necessary and sufficient conditions for an
equivalence relation to be a MacWilliams-type equivalence relation.
The MacWilliams-type identites derived by our characterization are
presented in the matrix forms, say $P_E$ and $Q_{E^*}$. The entries
of $P_E$ and $Q_{E^*}$ are explicitly presented. Moreover, we prove
that $P_E$ is a uniquely determined by $Q_{E^*}$ and vice versa.
%We start with descriptions of properties on additive characters.

An additive character $\chi$ of $\mathbb{F}_q$ is a homomorphism
from the additive group of $\mathbb{F}_q$ into the multiplicative
group of complex numbers of absolute value one \cite{MS}. Throughout
all sections, we denote $\chi$ a nontrivial additive character of
$\mathbb{F}_q$.

\begin{lemma}\label{definition of P_I(J)}
Let $\mathcal{P}$ be a poset on $[n]$, $E$ an equivalence relation
on $\mathcal{I}(\mathcal{P})$ and $E^*$ the dual relation of $E$.
Then for any linear $\mathcal{P}$-code $\mathcal{C}$ of
$\mathbb{F}_q^n$,
\begin{eqnarray*}\label{shape weight dist3}
&(i)& \ A_{\overline{I}, E}(\mathcal{C}) = \frac{1}{|\mathcal{C}^{\perp}|}\sum\limits_{\overline{J^c} \in \mathcal{I}(\mathcal{P}^*) / E^{*} } \sum\limits_{                                                                        u \in \mathcal{C}^{\perp} \cap S_{\overline{J^c},E^{*}}} \sum\limits_{v \in S_{\overline{I}, E}} \chi(u \cdot v) \mbox{ for } \overline{I} \in \mathcal{I}(\mathcal{P})/E,\\
&(ii)& \ A_{\overline{J^c}, E^{*}}(\mathcal{C}^{\perp}) = \frac{1}{|\mathcal{C}|}\sum\limits_{\overline{I} \in \mathcal{I}(\mathcal{P}) / E } \sum\limits_{                                                                        u \in \mathcal{C} \cap S_{\overline{I},E}} \sum\limits_{v \in S_{\overline{J^c}, E^{*}}} \chi(u \cdot v) \mbox{ for } \overline{J^c} \in \mathcal{I}(\mathcal{P}^*)/E^{*}.
\end{eqnarray*}
\end{lemma}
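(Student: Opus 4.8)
The plan is to deduce both identities directly from the orthogonality relations for additive characters of a linear code and its dual, combined with the fact — already recorded in Section 2 — that the spheres $S_{\overline{J^c},E^*}$, as $\overline{J^c}$ runs over $\mathcal{I}(\mathcal{P}^*)/E^*$ (resp.\ the spheres $S_{\overline{I},E}$, as $\overline{I}$ runs over $\mathcal{I}(\mathcal{P})/E$), form a partition of $\mathbb{F}_q^n$. The first step is to record the orthogonality lemma: for any $v\in\mathbb{F}_q^n$ and any linear code $\mathcal{D}\subseteq\mathbb{F}_q^n$ one has $\sum_{u\in\mathcal{D}}\chi(u\cdot v)=|\mathcal{D}|$ if $v\in\mathcal{D}^{\perp}$ and $0$ otherwise. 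This holds because $u\mapsto u\cdot v$ is an $\mathbb{F}_q$-linear map on $\mathcal{D}$, hence its image is either $\{0\}$ (exactly when $v\in\mathcal{D}^{\perp}$) or all of $\mathbb{F}_q$; in the latter case the sum is a nonzero multiple of $\sum_{a\in\mathbb{F}_q}\chi(a)=0$, the vanishing being valid since $\chi$ is nontrivial.

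For $(i)$, since every $u\in\mathbb{F}_q^n$ has $\langle\mathrm{supp}(u)\rangle_{\mathcal{P}^*}\in\mathcal{I}(\mathcal{P}^*)$ and therefore lies in exactly one equivalence class $\overline{J^c}$, the double sum $\sum_{\overline{J^c}}\sum_{u\in\mathcal{C}^{\perp}\cap S_{\overline{J^c},E^*}}$ collapses to $\sum_{u\in\mathcal{C}^{\perp}}$. Interchanging the order of summation, the right-hand side of $(i)$ becomes $\frac{1}{|\mathcal{C}^{\perp}|}\sum_{v\in S_{\overline{I},E}}\bigl(\sum_{u\in\mathcal{C}^{\perp}}\chi(u\cdot v)\bigr)$, and applying the orthogonality lemma with $\mathcal{D}=\mathcal{C}^{\perp}$, so that $\mathcal{D}^{\perp}=\mathcal{C}$, the inner sum equals $|\mathcal{C}^{\perp}|$ for $v\in\mathcal{C}$ and $0$ otherwise. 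Hence the whole expression equals $|S_{\overline{I},E}\cap\mathcal{C}|=A_{\overline{I},E}(\mathcal{C})$, which is exactly the claim.

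For $(ii)$, the argument is the mirror image: exchange the roles of $\mathcal{C}$ and $\mathcal{C}^{\perp}$ and of $\mathcal{P}$ and $\mathcal{P}^*$, use that the spheres $S_{\overline{I},E}$ partition $\mathbb{F}_q^n$ to collapse $\sum_{\overline{I}}\sum_{u\in\mathcal{C}\cap S_{\overline{I},E}}$ to $\sum_{u\in\mathcal{C}}$, interchange summation, and invoke the orthogonality lemma with $\mathcal{D}=\mathcal{C}$, noting $(\mathcal{C}^{\perp})^{\perp}=\mathcal{C}$, so the inner sum over $u\in\mathcal{C}$ is $|\mathcal{C}|$ precisely when $v\in\mathcal{C}^{\perp}$. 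Dividing by $|\mathcal{C}|$ returns $|S_{\overline{J^c},E^*}\cap\mathcal{C}^{\perp}|=A_{\overline{J^c},E^*}(\mathcal{C}^{\perp})$.

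I do not expect a genuine obstacle here; the computation is routine. The only two points needing a word of care are: making explicit that the relevant families of spheres do partition $\mathbb{F}_q^n$ (this rests on $S_{\overline{I},E}(\mathbf{0})$ being defined through $\langle\mathrm{supp}(v)\rangle_{\mathcal{P}}$ together with $E$ being an equivalence relation, as observed in Section 2), and being careful to apply orthogonality to the correct code, since in $(i)$ the code $\mathcal{C}$ enters only as the dual of $\mathcal{C}^{\perp}$.
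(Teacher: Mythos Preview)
Your proof is correct and follows essentially the same approach as the paper: both rely on the partition of the code (or of $\mathbb{F}_q^n$) by the spheres $S_{\overline{I},E}$ and $S_{\overline{J^c},E^*}$, together with the standard character orthogonality relation $\sum_{u\in\mathcal{D}}\chi(u\cdot v)=|\mathcal{D}|\cdot\mathbf{1}_{v\in\mathcal{D}^{\perp}}$. The only cosmetic difference is that the paper starts from the left-hand side of $(ii)$ and expands toward the triple sum, whereas you start from the right-hand side and collapse it; the underlying computation is the same.
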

\begin{proof}
For a linear $\mathcal{P}$-code $\mathcal{C}$ in $\mathbb{F}_q^n$, we see that
$$\mathcal{C} = \bigcup \limits_{\overline{I} \in \mathcal{I}(\mathcal{P}) / E} \mathcal{C} \cap S_{\overline{I}, E},$$
where the union is disjoint. It is well-known \cite{MS} that for any
linear $\mathcal{P}$-code $\mathcal{C}$ over $\mathbb{F}_q$,
\begin{eqnarray}\label{additive character}
\sum_{v \in \mathcal{C}} \chi(u \cdot v) =
\left\{\begin{array}{ll}
|\mathcal{C}| & \mbox{if} \ u \in \mathcal{C}^{\bot}, \\
0 & \mbox{if} \ u \not \in \mathcal{C}^{\bot}.
\end{array}\right.
\end{eqnarray}
It follows that for $\overline{J^c} \in \mathcal{I}(\mathcal{P}^*)/E^*$,
%For $\bar{J^c} \in \mathcal{I}(\mathcal{P^*}) / \overline{E}$, we have
\begin{eqnarray*}
A_{\overline{J^c}, E^*}(\mathcal{C}^{\perp}) &=& \sum\limits_{v \in \mathcal{C}^{\perp} \cap S_{\overline{J^c}, E^*} } 1\nonumber\\
&=& \sum\limits_{v \in S_{\overline{J^c}, E^*} } \frac{1}{|\mathcal{C}|}\sum\limits_{u \in \mathcal{C}}\chi(u \cdot v) \  \ \ \ \ (\mbox{by }(\ref{additive character}))\nonumber\\
&=& \frac{1}{|\mathcal{C}|}\sum\limits_{u \in \mathcal{C}} \sum\limits_{v \in S_{\overline{J^c}, E^*} } \chi(u \cdot v)\nonumber\\
&=& \frac{1}{|\mathcal{C}|}\sum\limits_{\overline{I} \in \mathcal{I}(\mathcal{P})/ E} \sum\limits_{u \in \mathcal{C} \cap S_{\overline{I},E} } \sum\limits_{v \in S_{\overline{J^c}, E^*} } \chi(u \cdot v).\nonumber
\end{eqnarray*}
This proves $(ii)$. In the same way, we can obtain $(i)$.
\end{proof}

\begin{corollary}\label{definition of P_I(J)2}
Let $\mathcal{P}$ be a poset on $[n]$, $E$ an equivalence relation
on $\mathcal{I}(\mathcal{P})$ and $E^*$ the dual relation of $E$.
Then for any $1$-dimensional linear $\mathcal{P}$-code $\mathcal{C}$
of $\mathbb{F}_q^n$ generated by a nonzero vector $u$,
\begin{eqnarray*}\label{shape weight dist4}
&(i)& \ A_{\overline{I}, E}(\mathcal{C}) = \left\{\begin{array}{ll}
1 & \mbox{if} \ I = \emptyset, \\
q-1 & \mbox{if} \ u \in S_{\overline{I}, E}, \\
0 & \mbox{otherwise}.
\end{array}\right. \mbox{ for } \overline{I} \in \mathcal{I}(\mathcal{P})/E,\\
&(ii)& \ A_{\overline{J^c}, E^*}(\mathcal{C}^{\perp}) = \frac{1}{q}\left(|S_{\overline{J^c}, E^*}| + (q-1) \sum\limits_{v \in S_{\overline{J^c}, E^*} } \chi(u \cdot v)\right) \mbox{ for } \overline{J^c} \in \mathcal{I}(\mathcal{P}^*)/E^{*}.
\end{eqnarray*}
\end{corollary}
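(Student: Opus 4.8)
The plan is to deduce both parts directly from Lemma~\ref{definition of P_I(J)}, together with the elementary identity~(\ref{additive character}), applied to the $1$-dimensional code $\mathcal{C} = \{cu \mid c \in \mathbb{F}_q\}$, for which $|\mathcal{C}| = q$.

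For part $(i)$, I would simply bookkeep which $\overline{I}$-spheres meet $\mathcal{C}$. Since $\mbox{supp}(cu) = \mbox{supp}(u)$ for every nonzero $c$, all $q-1$ nonzero vectors of $\mathcal{C}$ share the same order ideal $I_0 := \langle \mbox{supp}(u) \rangle_{\mathcal{P}}$, hence lie in the single sphere $S_{I_0} \subseteq S_{\overline{I_0},E}$, while $\mathbf{0} \in S_{\emptyset} \subseteq S_{\overline{\emptyset},E}$. Thus $\mathcal{C} \cap S_{\overline{I},E}$ is empty unless $\overline{I} = \overline{\emptyset}$ (contributing $\mathbf{0}$ only, since the class of $\emptyset$ is a singleton for the relations under consideration, so $\overline{\emptyset} \neq \overline{I_0}$) or $\overline{I} = \overline{I_0}$ (contributing the $q-1$ nonzero multiples of $u$). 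Observing that the condition ``$u \in S_{\overline{I},E}$'' is exactly $\overline{I} = \overline{I_0}$ yields the stated trichotomy.

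For part $(ii)$, I would start from Lemma~\ref{definition of P_I(J)}$(ii)$ with $|\mathcal{C}| = q$ and collapse the outer double sum $\sum_{\overline{I}} \sum_{w \in \mathcal{C} \cap S_{\overline{I},E}}$ into a single sum over all $w \in \mathcal{C}$, since the sets $\mathcal{C} \cap S_{\overline{I},E}$ partition $\mathcal{C}$. Writing $w = cu$ with $c$ ranging over $\mathbb{F}_q$ and separating the term $c = 0$ gives
\begin{eqnarray*}
A_{\overline{J^c}, E^*}(\mathcal{C}^{\perp}) &=& \frac{1}{q} \sum_{v \in S_{\overline{J^c}, E^*}} \sum_{c \in \mathbb{F}_q} \chi(cu \cdot v) \\
&=& \frac{1}{q}\left( |S_{\overline{J^c}, E^*}| + \sum_{c \in \mathbb{F}_q^{\times}} \sum_{v \in S_{\overline{J^c}, E^*}} \chi(cu \cdot v) \right).
\end{eqnarray*}
The one point needing a short argument is that $S_{\overline{J^c}, E^*}$ is invariant under multiplication by any $c \in \mathbb{F}_q^{\times}$ (again because $\mbox{supp}$ is unchanged by nonzero scaling), so for each fixed $c$ the substitution $v \mapsto cv$ gives $\sum_{v \in S_{\overline{J^c}, E^*}} \chi(cu \cdot v) = \sum_{v \in S_{\overline{J^c}, E^*}} \chi(u \cdot v)$; summing over the $q-1$ choices of $c$ produces the factor $q-1$ and hence the claimed formula.

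I do not expect a serious obstacle: the whole statement is a specialization of Lemma~\ref{definition of P_I(J)}, and the only steps requiring any care are the identification in part $(i)$ of which spheres actually intersect the $1$-dimensional code (in particular ruling out the degenerate overlap $\overline{\emptyset} = \overline{I_0}$) and the scaling-invariance change of variables that extracts the coefficient $q-1$ in part $(ii)$.
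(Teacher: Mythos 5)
Your proof is correct and follows essentially the same route as the paper: part $(i)$ by observing that all nonzero multiples of $u$ lie in the same sphere, and part $(ii)$ by specializing Lemma~\ref{definition of P_I(J)}$(ii)$, splitting off the $\alpha=0$ term, and using the invariance of $S_{\overline{J^c},E^*}$ under nonzero scaling to extract the factor $q-1$. Your parenthetical care in $(i)$ about ruling out $\overline{\emptyset}=\overline{I_0}$ is a detail the paper's statement and proof simply leave implicit, so it is a harmless (indeed slightly more scrupulous) addition rather than a divergence.
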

\begin{proof}
Since $\mathcal{C}$ is generated by $u$, $\mathcal{C} = \{\alpha u \mid \alpha \in \mathbb{F}_q\}$. If $u \in S_{\overline{I}, E}$, then $\alpha u \in S_{\overline{I}, E}$ for $\alpha \in \mathbb{F}_q^*$. This proves $(i)$. It follows from Lemma \ref{definition of P_I(J)} that for $\overline{J^c} \in \mathcal{I}(\mathcal{P}^*)/E^{*}$,
\begin{eqnarray*}
A_{\overline{J^c}, E^{*}}(\mathcal{C}^{\perp}) &=& \frac{1}{|\mathcal{C}|}\sum\limits_{\overline{I} \in \mathcal{I}(\mathcal{P}) / E } \sum\limits_{w \in \mathcal{C} \cap S_{\overline{I},E} } \sum\limits_{v \in S_{\overline{J^c}, E^{*}}} \chi(w \cdot v) \nonumber\\
&=& \frac{1}{q} \sum\limits_{\alpha \in \mathbb{F}_q} \sum\limits_{v \in S_{\overline{J^c}, E^*}} \chi((\alpha u) \cdot v) \nonumber\\
&=& \frac{1}{q}\left(\sum\limits_{v \in S_{\overline{J^c}, E^{*}}} \chi(0 \cdot v) + \sum\limits_{\alpha \in \mathbb{F}^*_q} \sum\limits_{v \in S_{\overline{J^c}, E^{*}}} \chi((\alpha u) \cdot v) \right)\nonumber\\
&=& \frac{1}{q}\left(|S_{\overline{J^c}, E^{*}}|  + \sum\limits_{\alpha \in \mathbb{F}^*_q} \sum\limits_{v \in S_{\overline{J^c},E^{*}}} \chi(u \cdot (\alpha v)) \right).\nonumber
\end{eqnarray*}
Since $S_{\overline{J^c}, E^*} = \{\alpha v\mid v  \in S_{\overline{J^c}, E^*}\}$ for $\alpha \in \mathbb{F}_q^*$, we have
\begin{eqnarray*}
A_{\overline{J^c}, E^{*}}(\mathcal{C}^{\perp}) &=& \frac{1}{q}\left(|S_{\overline{J^c}, E^{*}}| + (q-1) \sum\limits_{v \in S_{\overline{J^c}, E^{*}}} \chi(u \cdot v)\right).\nonumber
\end{eqnarray*}
This proves $(ii)$.
\end{proof}

We are ready to state equivalent conditions for the MacWilliams-type
equivalence relation.

\begin{theorem}\label{S.M.I.2}
Let $\mathcal{P}$ be a poset on $[n]$, $E$ an equivalence relation
on $\mathcal{I}(\mathcal{P})$
and $E^*$ the dual relation of $E$. The following statements are equivalent.\\
$(i)$ $E$ is a MacWilliams-type equivalence relation on $\mathcal{I}(\mathcal{P})$.\\
$(ii)$ For $\overline{I} \in \mathcal{I}(\mathcal{P}) / E$ and $\overline{J^c} \in \mathcal{I}(\mathcal{P^*}) / E^*$, we have\\
\mbox{ }  \ \ \ $(a)$ If $u$ and $u'$ are in $S_{\overline{I},E}$, then $\sum\limits_{v \in S_{\overline{J^c}, E^*} } \chi(u \cdot v) = \sum\limits_{v \in S_{\overline{J^c}, E^*} } \chi(u' \cdot v)$.\\
\mbox{ } \ \ \ $(b)$ If $v$ and $v'$ are in $S_{\overline{J^c}, E^*}$, then $\sum\limits_{u \in S_{\overline{I},E} } \chi(u \cdot v) = \sum\limits_{u \in S_{\overline{I},E} } \chi(u \cdot v')$.\\
$(iii)$ There are matrices $Q_{E^{*}}$ and $P_E$ over $\mathbb{F}_q$ such that for any linear $\mathcal{P}$-code $\mathcal{C}$ in $\mathbb{F}_q^n$, we have\\
\mbox{ }  \ \ \ $(a)$ $W(\mathcal{C}^{\perp}, \mathcal{P^*},E^*) = \frac{1}{|\mathcal{C}|} W(\mathcal{C}, \mathcal{P},E) Q_{E^{*}}$.\\
\mbox{ }  \ \ \ $(b)$ $W(\mathcal{C}, \mathcal{P},E) =  \frac{1}{|\mathcal{C}^{\perp}|} W(\mathcal{C}^{\perp}, \mathcal{P^*},E^*) P_E$.
\end{theorem}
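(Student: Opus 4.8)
The plan is to run the cycle $(i)\Rightarrow(ii)\Rightarrow(iii)\Rightarrow(i)$, but to halve the work I will prove only the ``$(a)$-parts'' of $(ii)$ and $(iii)$ directly and then get the ``$(b)$-parts'' by feeding the pair $(\mathcal P^{*},E^{*})$ into the very same arguments. This reduction is legitimate: $\mathcal P^{**}=\mathcal P$, $E^{**}=E$, $S_{\overline I,E^{**}}=S_{\overline I,E}$, and by the observation after Definition~\ref{definition} the relation $E$ is of MacWilliams-type iff $E^{*}$ is; moreover, using $\chi(u\cdot v)=\chi(v\cdot u)$, statement $(ii)(a)$ written for $(\mathcal P^{*},E^{*})$ is precisely $(ii)(b)$ for $(\mathcal P,E)$, and $(iii)(a)$ for $(\mathcal P^{*},E^{*})$, evaluated at $\mathcal D=\mathcal C^{\perp}$, is precisely $(iii)(b)$ for $(\mathcal P,E)$ with $P_{E}$ the matrix produced there. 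So it suffices to prove $(i)\Rightarrow(ii)(a)\Rightarrow(iii)(a)\Rightarrow(i)$.

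For $(i)\Rightarrow(ii)(a)$ I would use one-dimensional codes as probes. Fix $\overline I\in\mathcal I(\mathcal P)/E$, $\overline{J^{c}}\in\mathcal I(\mathcal P^{*})/E^{*}$ and $u,u'\in S_{\overline I,E}$. If $\overline I=\overline{\emptyset}$ then $u=u'=\mathbf 0$ (since $\langle\emptyset\rangle_{\mathcal P}=\emptyset$, and $\overline{\emptyset}=\{\emptyset\}$ for the relations in play), so there is nothing to prove; otherwise $u,u'\neq\mathbf 0$. By Corollary~\ref{definition of P_I(J)2}$(i)$ the lines $\mathcal C=\langle u\rangle$ and $\mathcal C'=\langle u'\rangle$ have the same $E$-weight distribution: value $1$ in the coordinate $\overline{\emptyset}$, value $q-1$ in the coordinate $\overline I$, and $0$ elsewhere. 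Since $E$ is of MacWilliams-type, $W((\mathcal C)^{\perp},\mathcal P^{*},E^{*})=W((\mathcal C')^{\perp},\mathcal P^{*},E^{*})$, and comparing the $\overline{J^{c}}$-coordinates via Corollary~\ref{definition of P_I(J)2}$(ii)$ gives
\[
|S_{\overline{J^{c}},E^{*}}|+(q-1)\sum_{v\in S_{\overline{J^{c}},E^{*}}}\chi(u\cdot v)=|S_{\overline{J^{c}},E^{*}}|+(q-1)\sum_{v\in S_{\overline{J^{c}},E^{*}}}\chi(u'\cdot v);
\]
cancelling the nonzero integer $q-1$ yields $(ii)(a)$.

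For $(ii)(a)\Rightarrow(iii)(a)$: each $S_{\overline I,E}$ is nonempty (an order ideal is generated by its maximal elements), so under $(ii)(a)$ I may define a matrix $Q_{E^{*}}$ with rows indexed by $\mathcal I(\mathcal P)/E$ and columns by $\mathcal I(\mathcal P^{*})/E^{*}$, its $(\overline I,\overline{J^{c}})$-entry being $\sum_{v\in S_{\overline{J^{c}},E^{*}}}\chi(u\cdot v)$ for an arbitrary $u\in S_{\overline I,E}$. Substituting this into Lemma~\ref{definition of P_I(J)}$(ii)$, the inner character sum no longer depends on $u\in\mathcal C\cap S_{\overline I,E}$, so the double sum over $u$ collapses to $A_{\overline I,E}(\mathcal C)\,Q_{E^{*}}(\overline I,\overline{J^{c}})$ and Lemma~\ref{definition of P_I(J)}$(ii)$ becomes exactly the $\overline{J^{c}}$-coordinate of $\frac{1}{|\mathcal C|}W(\mathcal C,\mathcal P,E)Q_{E^{*}}$, valid for every linear $\mathcal P$-code $\mathcal C$ with this single $Q_{E^{*}}$; this is $(iii)(a)$. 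Then $(iii)(a)\Rightarrow(i)$ is immediate: from the disjoint decomposition $\mathcal C=\bigcup_{\overline I}(\mathcal C\cap S_{\overline I,E})$ one has $|\mathcal C|=\sum_{\overline I}A_{\overline I,E}(\mathcal C)$, so equality of $E$-weight distributions forces equality of the orders, whence $(iii)(a)$ forces equality of the $E^{*}$-weight distributions of the duals.

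The step I expect to be most delicate is $(i)\Rightarrow(ii)(a)$ --- not for any depth, since Corollary~\ref{definition of P_I(J)2} is built exactly for it, but because the probe argument needs the two lines $\langle u\rangle$ and $\langle u'\rangle$ to have \emph{identical} $E$-weight distributions, which calls for some care around the base class $\overline{\emptyset}$ (this is harmless for $E_{C}$, $E_{S}$ and $E_{\mathrm{Aut}(\mathcal P)}$, where $S_{\overline{\emptyset},E}=\{\mathbf 0\}$). A secondary item to check carefully is the bookkeeping in the reduction to $(\mathcal P^{*},E^{*})$: that $E^{**}=E$ and $S_{\overline I,E^{**}}=S_{\overline I,E}$ indeed turn the $(b)$-statements for $E$ into the $(a)$-statements for $E^{*}$, and that the matrix manufactured for $E^{*}$, pulled back through $\mathcal D=\mathcal C^{\perp}$, is a legitimate $P_{E}$ for $(iii)(b)$.
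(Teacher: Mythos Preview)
Your proposal is correct and follows essentially the same route as the paper: the probe with one-dimensional codes via Corollary~\ref{definition of P_I(J)2} for $(i)\Rightarrow(ii)$, the definition of the matrices from the constant character sums together with Lemma~\ref{definition of P_I(J)} for $(ii)\Rightarrow(iii)$, and the trivial $(iii)\Rightarrow(i)$. The only organisational difference is that you run a clean cycle and invoke the $(\mathcal P,E)\leftrightarrow(\mathcal P^{*},E^{*})$ symmetry once to obtain the $(b)$-statements, whereas the paper proves $(i)\Leftrightarrow(ii)$, $(ii)\Rightarrow(iii)$, $(iii)\Rightarrow(i)$ and each time dismisses the $(b)$-half with ``in the same way''; your packaging is tidier but not substantively different. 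Your flag about the base class $\overline{\emptyset}$ is well taken---the paper's argument, like yours, tacitly uses that $u,u'$ can be taken nonzero when comparing the one-dimensional probes---and your observation that $S_{\overline{\emptyset},E}=\{\mathbf 0\}$ for $E_{C}$, $E_{S}$, $E_{H}$ is exactly what is needed for the applications in Section~4.
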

\begin{proof}
$(i)\Rightarrow(ii)$ Suppose an equivalence relation $E$ on
$\mathcal{I}(\mathcal{P})$ doesn't admit either $(a)$ in $(ii)$  or
$(b)$ in $(ii)$. Without loss of generality, we assume that there
are $u$ and $u'$ in $S_{\overline{I},E}$ such that $\sum_{v \in
S_{\overline{J^c}, E^*} } \chi(u \cdot v) \neq \sum_{v \in
S_{\overline{J^c}, E^*} } \chi(u' \cdot v)$. Let $\mathcal{C}_1$ and
$\mathcal{C}_2$ be 1-dimensional codes of $\mathbb{F}_q^n$ generated
by $u$ and $u'$, respectively. It follows from Corollary
\ref{definition of P_I(J)2} that $W(\mathcal{C}_1, \mathcal{P},E) =
W(\mathcal{C}_2, \mathcal{P},E)$ and $W(\mathcal{C}^{\perp}_1,
\mathcal{P^*}, E^*) \neq W(\mathcal{C}^{\perp}_2, \mathcal{P^*},
E^*)$. So $E$ is not a MacWilliams-type equivalence relation on
$\mathcal{I}(\mathcal{P})$.

$(ii)\Rightarrow(i)$ Suppose an equivalence relation $E$ on
$\mathcal{I}(\mathcal{P})$ admits $(a)$ and $(b)$ in $(ii)$. We
claim that for linear $\mathcal{P}$-codes $\mathcal{C}_1$ and
$\mathcal{C}_2$ in $\mathbb{F}_q^n$,
\begin{eqnarray*}
W(\mathcal{C}_1, \mathcal{P},E) =  W(\mathcal{C}_2, \mathcal{P},E) \mbox{ if and only if } W(\mathcal{C}_1^{\perp}, \mathcal{P^*},E^*) = W(\mathcal{C}_2^{\perp}, \mathcal{P^*},E^*).
\end{eqnarray*}
Assume that $W(\mathcal{C}_1, \mathcal{P},E) =  W(\mathcal{C}_2,
\mathcal{P},E)$. Since the equivalence relation $E$ admits $(a)$ in
$(ii)$, the summation $\sum_{v \in S_{\overline{J^c},E*} } \chi(u
\cdot v)$ is a constant for any $u \in S_{\overline{I}, E}$. Put
$p_{_{\overline{J^c}, \overline{I}}} = \sum_{v \in
S_{\overline{J^c}, E^*} } \chi(u \cdot v)$ for $u \in
S_{\overline{I},E}$. If follows from Lemma \ref{definition of
P_I(J)} that for $j=1,2,$
\begin{eqnarray}
A_{\overline{J^c}, E^*}(\mathcal{C}^{\perp}_j) &=&
\frac{1}{|\mathcal{C}_j|}\sum\limits_{\overline{I} \in
\mathcal{I}(\mathcal{P}) / E^* } \sum\limits_{w\in \mathcal{C}_j
\cap S_{\overline{I},E}  }
\sum\limits_{v \in S_{\overline{J^c},E^*}} \chi(w \cdot v) \nonumber\\
&=& \frac{1}{|\mathcal{C}_j|}\sum\limits_{\overline{I} \in
\mathcal{I}(\mathcal{P}) / E^* } A_{\overline{I}, E}(\mathcal{C}_j)
p_{_{\overline{J^c}, \overline{I}}},\label{shape weight dist4}
\end{eqnarray}
which implies that $W(\mathcal{C}_1^{\perp}, \mathcal{P^*},E^*) = W(\mathcal{C}_2^{\perp}, \mathcal{P^*},E^*)$.

By the same argument as above, we can prove the other direction. Thus $E$ is a MacWilliams-type equivalence relation on $\mathcal{I}(\mathcal{P})$.

$(ii)\Rightarrow(iii)$ Suppose an equivalence relation $E$ on
$\mathcal{I}(\mathcal{P})$ admits $(a)$ and $(b)$ in $(ii)$. For
$\overline{I} \in \mathcal{I}(\mathcal{P}) / E$ and $\overline{J^c}
\in \mathcal{I}(\mathcal{P^*}) / E^*$, the summations $\sum_{v \in
S_{\overline{J^c},E*} } \chi(u \cdot v)$ and $\sum_{u \in
S_{\overline{I},E} } \chi(u \cdot v)$ are constants for $u \in
S_{\overline{I}, E}$ and $v \in S_{\overline{J^c}, E^*}$. Define the
matrix $P_E$ and $Q_{E^*}$ as follows:
\begin{align}
P_E = [p_{_{\overline{J^c}, \overline{I}}}] \mbox{ and }
Q_{E^*} = [q_{_{\overline{I}, \overline{J^c}}}],
\end{align}
where $p_{_{\overline{J^c}, \overline{I}}} = \sum_{v \in S_{\overline{J^c}, E^*} } \chi(u \cdot v)$ for $u \in S_{\overline{I},E}$ and $q_{_{\overline{I}, \overline{J^c}}} = \sum_{u \in S_{\overline{I},E} } \chi(u \cdot v)$ for $v \in S_{\overline{J^c}, E^*}$. Here $P_E$ is an
$|\mathcal{I}(\mathcal{P^*})/E^*|\times|\mathcal{I}(\mathcal{P})/E|$
matrix with rows and columns labelled by the elements of
$\mathcal{I}(\mathcal{P^*})/E^*$ and of
$\mathcal{I}(\mathcal{P})/E$, respectively, and $Q_E$ is an
$|\mathcal{I}(\mathcal{P})/E|\times|\mathcal{I}(\mathcal{P^*})/E^*|$
matrix with rows and columns labelled by the elements of
$\mathcal{I}(\mathcal{P})/E$ and of
$\mathcal{I}(\mathcal{P^*})/E^*$, respectively.
If follows from $(\ref{shape weight dist4})$ that $W(\mathcal{C}^{\perp}, \mathcal{P^*},E^*) = \frac{1}{|\mathcal{C}|} W(\mathcal{C}, \mathcal{P},E) Q_{E^{*}}$ for any linear $\mathcal{P}$-code $\mathcal{C}$ in $\mathbb{F}_q^n$. In the same way, we can obtain $W(\mathcal{C}, \mathcal{P},E) =  \frac{1}{|\mathcal{C}^{\perp}|} W(\mathcal{C}^{\perp}, \mathcal{P^*},E^*) P_E$ for any linear $\mathcal{P}$-code $\mathcal{C}$ in $\mathbb{F}_q^n$.

$(iii)\Rightarrow(i)$ Suppose an equivalence relation $E$ on
$\mathcal{I}(\mathcal{P})$ admits $(a)$ and $(b)$ in $(iii)$. We
claim that for linear $\mathcal{P}$-codes $\mathcal{C}_1$ and
$\mathcal{C}_2$ in $\mathbb{F}_q^n$,
\begin{eqnarray*}
W(\mathcal{C}_1, \mathcal{P},E) =  W(\mathcal{C}_2, \mathcal{P},E) \mbox{ if and only if } W(\mathcal{C}_1^{\perp}, \mathcal{P^*},E^*) = W(\mathcal{C}_2^{\perp}, \mathcal{P^*},E^*).
\end{eqnarray*}
Assume that $W(\mathcal{C}_1, \mathcal{P},E) =  W(\mathcal{C}_2,
\mathcal{P},E)$. Since the equivalence relation $E$ admits $(a)$ in
$(iii)$, we have
\begin{eqnarray*}
W(\mathcal{C}_1^{\perp}, \mathcal{P^*},E^*) = \frac{1}{|\mathcal{C}_1|} W(\mathcal{C}_1, \mathcal{P},E) Q_{E^{*}} = \frac{1}{|\mathcal{C}_2|} W(\mathcal{C}_2, \mathcal{P},E) Q_{E^{*}} = W(\mathcal{C}_2^{\perp}, \mathcal{P^*},E^*).
\end{eqnarray*}

By the same argument as above, we can prove the other direction. Thus $E$ is a MacWilliams-type equivalence relation on $\mathcal{I}(\mathcal{P})$.
\end{proof}

\begin{definition}
Let $\mathcal{P}$ be a poset on $[n]$ and $E$ a MacWilliams-type
equivalence relation on $\mathcal{I}(\mathcal{P})$. We call the
matrix $P_E$ defined in $(3)$ the $P$-matrix with respect to $E$ and
call the matrix $Q_{E^*}$ defined in $(3)$ the $Q$-matrix with
respect to $E^*$.
\end{definition}
From now on, we try to find out formulae for the entries of $P_E$
and $Q_{E^*}$.
\begin{lemma}\label{sphere}
Let $\mathcal{P}$ be a poset on $[n]$. For $I \in \mathcal{I}(\mathcal{P})$, we have
$$
S_{I} =
\left\{ (v_1,v_2,\ldots,v_n) \in \mathbb{F}_q^n \mid v_i \in \left[ \begin{array}{ll} \mathbb{F}_q^* &\mbox{ if } i \in M(I),\\
\mathbb{F}_q &\mbox{ if } i \in I_M,\\
\{0\}&
\mbox{ if } i \in I^c. \end{array}\right.
\right\}.
$$
\end{lemma}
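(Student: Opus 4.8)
The plan is to unwind the definitions of $\langle\mathrm{supp}(v)\rangle_{\mathcal{P}}$, $M(I)$, and $I_M$, and to argue by double inclusion that a vector $v=(v_1,\dots,v_n)$ lies in $S_I$ precisely when its coordinates are constrained as claimed. Recall $S_I = S_I(\mathbf{0}) = \{v\in\mathbb{F}_q^n \mid \langle\mathrm{supp}(v)\rangle_{\mathcal{P}} = I\}$, so the entire statement is about characterizing which supports generate $I$ as an order ideal.

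First I would prove the inclusion $\supseteq$. Suppose $v$ has $v_i\in\mathbb{F}_q^*$ for $i\in M(I)$, $v_i\in\mathbb{F}_q$ for $i\in I_M$, and $v_i=0$ for $i\in I^c$. Then $\mathrm{supp}(v)\subseteq I$, and since $I$ is an order ideal, $\langle\mathrm{supp}(v)\rangle_{\mathcal{P}}\subseteq I$. For the reverse containment, note that $M(I)\subseteq\mathrm{supp}(v)$ because every maximal element of $I$ carries a nonzero coordinate; since every element of $I$ lies below some maximal element of $I$ (here one uses finiteness of $[n]$), we get $I\subseteq\langle M(I)\rangle_{\mathcal{P}}\subseteq\langle\mathrm{supp}(v)\rangle_{\mathcal{P}}$. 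Hence $\langle\mathrm{supp}(v)\rangle_{\mathcal{P}} = I$, i.e. $v\in S_I$.

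Next I would prove $\subseteq$. Let $v\in S_I$, so $\langle\mathrm{supp}(v)\rangle_{\mathcal{P}} = I$. Then $\mathrm{supp}(v)\subseteq I$, which already forces $v_i=0$ for $i\in I^c$; this handles the third case. For $i\in M(I)$: if some maximal element $i$ of $I$ had $v_i=0$, then $\mathrm{supp}(v)\subseteq I\setminus\{i\}$, and since $i$ is maximal in $I$ the set $I\setminus\{i\}$ is still an order ideal, so $\langle\mathrm{supp}(v)\rangle_{\mathcal{P}}\subseteq I\setminus\{i\}\subsetneq I$, contradicting $v\in S_I$. Hence $v_i\neq 0$ for all $i\in M(I)$, which is the first case. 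The second case, $i\in I_M$, imposes no constraint at all ($v_i$ ranges freely over $\mathbb{F}_q$), and indeed no constraint is needed: whether or not $v_i=0$ for a nonmaximal $i\in I$, the generated ideal is unaffected since $i$ already lies below some maximal element of $I$, which is in the support.

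I do not expect a genuine obstacle here; the one point requiring a little care is the use of finiteness of the ground set $[n]$ to guarantee that every element of an order ideal lies beneath a maximal element of that ideal — this is what makes $I=\langle M(I)\rangle_{\mathcal{P}}$ and is exactly why the coordinates on $I_M$ may be arbitrary. I would state that observation explicitly as the crux of the argument. Everything else is bookkeeping with the definitions of support, order ideal, and the partition $I = M(I)\,\dot\cup\, I_M$.
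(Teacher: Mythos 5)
Your proof is correct and follows the same route as the paper, which simply observes that $\langle\mathrm{supp}(v)\rangle_{\mathcal{P}} = I$ if and only if $M(I)\subseteq\mathrm{supp}(v)\subseteq I$; you have spelled out the double inclusion behind that equivalence, including the (correct) use of finiteness to get $I=\langle M(I)\rangle_{\mathcal{P}}$. No gaps.
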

\begin{proof}
From the definition of the $I$-sphere $S_I$, we have
\begin{eqnarray*}
S_{I} = \{ v \in \mathbb{F}_q^n \mid \langle \mbox{supp}(v) \rangle_{\mathcal{P}} = I\}
\end{eqnarray*}
Since $\langle \mbox{supp}(v) \rangle_{\mathcal{P}} = I$ if and only if
$M(I) \subseteq \mbox{supp}(v) \subseteq I$, we have the result.
\end{proof}

\begin{lemma}\label{support of u}
Let $\mathcal{P}$ be a poset on $[n]$. For $I$ and $J$ in $\mathcal{I}(\mathcal{P})$, the following statements are equivalent.\\
$(i)$ $\mbox{supp}(u) \cap (J^c)_M = \emptyset$ for $u \in S_I$.\\
$(ii)$ $M(I) \cap (J^c)_M = \emptyset$.\\
$(iii)$ $I \cap (J^c)_M = \emptyset$.\\
$(iv)$ $I_M \cap J^c = \emptyset$.
\end{lemma}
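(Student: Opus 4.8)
The plan is to prove the cycle of implications $(i)\Rightarrow(ii)\Rightarrow(iii)\Rightarrow(iv)\Rightarrow(i)$, relying throughout on the description of the sphere $S_I$ given in Lemma~\ref{sphere} and on the elementary fact that for an order ideal $K$ of $\mathcal{P}$ one has $K^c \in \mathcal{I}(\mathcal{P}^*)$, so that $(J^c)_M$ denotes the set of nonmaximal elements of the order ideal $J^c$ of $\mathcal{P}^*$. The first observation to record is that, by Lemma~\ref{sphere}, a vector $u \in S_I$ has support satisfying $M(I) \subseteq \mathrm{supp}(u) \subseteq I$, and conversely every set $T$ with $M(I)\subseteq T \subseteq I$ is realized as $\mathrm{supp}(u)$ for some $u \in S_I$ (just put nonzero entries on $T$ and zeros elsewhere). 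This ``realizability'' is what lets one pass between a statement about \emph{all} $u\in S_I$ and a statement about the two distinguished sets $M(I)$ and $I$.

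For $(i)\Rightarrow(ii)$: since $M(I)\subseteq \mathrm{supp}(u)$ for every $u\in S_I$, the hypothesis $\mathrm{supp}(u)\cap (J^c)_M=\emptyset$ immediately forces $M(I)\cap(J^c)_M=\emptyset$. For $(ii)\Rightarrow(iii)$: here I would use that $(J^c)_M$ is itself an order ideal of $\mathcal{P}^*$, equivalently a \emph{filter} (up-set) complement situation in $\mathcal{P}$; more directly, I claim $I\cap (J^c)_M=\emptyset$ is equivalent to $M(I)\cap(J^c)_M=\emptyset$ because if some $x\in I\cap(J^c)_M$ then, taking $x$ to be a maximal element of $I$ above the given one (possible since $I$ is finite), and noting that $(J^c)_M$ is \emph{downward} closed in $\mathcal{P}^*$, i.e.\ upward closed in $\mathcal{P}$ — wait, this is the delicate point; I will instead argue that $(J^c)_M$ is an order ideal of $\mathcal{P}^*$, hence if $x\in I$ then every element of $M(I)$ lying $\preceq_{\mathcal{P}}$-above $x$ need not be in $(J^c)_M$, so the cleanest route is: $x\in I\cap(J^c)_M$ with $x$ not maximal in $I$ means there is $y\in I$, $x\prec_\mathcal{P} y$; choosing $y\in M(I)$, we get $y\preceq_{\mathcal{P}^*} x$, and since $(J^c)_M$ is an order ideal of $\mathcal{P}^*$ containing $x$, it contains $y$, so $y\in M(I)\cap(J^c)_M$, contradicting $(ii)$. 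Thus $(ii)\Rightarrow(iii)$. For $(iii)\Rightarrow(iv)$: $I_M\subseteq I$, so $I\cap(J^c)_M=\emptyset$ gives $I_M\cap(J^c)_M=\emptyset$; to upgrade $(J^c)_M$ to all of $J^c$, note $J^c = (J^c)_M \cup M(J^c)$ and if $x\in I_M\cap M(J^c)$ then $x$ is nonmaximal in $I$, so there is $y\in I$ with $x\prec_\mathcal{P} y$, i.e.\ $y\prec_{\mathcal{P}^*} x$; since $x$ is maximal in the order ideal $J^c$ of $\mathcal{P}^*$ and $y\prec_{\mathcal{P}^*}x$... hmm, that only tells us $y\in J^c$, which is automatic. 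Let me instead observe that $x\in M(J^c)$ means $x$ has no element of $J^c$ strictly above it in $\mathcal{P}^*$, i.e.\ no element of $J^c$ strictly below it in $\mathcal{P}$; but $x\in I_M$ means some $y\in I$ has $x\prec_\mathcal{P} y$, and then since $J$ is an order ideal of $\mathcal{P}$, $y\notin J$ would force... — the correct and symmetric statement is: $x\in M(J^c)$ in $\mathcal{P}^*$ iff $x$ is a \emph{minimal} element of $J^c$ in $\mathcal{P}$ iff every $z\prec_\mathcal{P} x$ lies in $J$; combining with $x\in I_M$, which gives $y\in I$, $y\succ_\mathcal{P} x$, we again get a contradiction with $(iii)$ applied to $x$: indeed $x\in I_M\subseteq I$ and $x\in M(J^c)\subseteq (J^c)$, and the needed fact is just $(iii)$ itself, so actually $(iii)$ already says $I\cap (J^c)_M=\emptyset$ and I need $I_M\cap J^c=\emptyset$ — these are genuinely different and I should prove $(iv)\Leftrightarrow(iii)$ by the duality $I\leftrightarrow I^c$, $J\leftrightarrow J^c$ which swaps the roles of maximal/nonmaximal. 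Concretely, $I_M\cap J^c=\emptyset \iff J^c\subseteq (I_M)^c \iff J\supseteq I_M{}^{\,c}$... I would instead prove $(iv)\Rightarrow(i)$ directly and $(iii)\Leftrightarrow(iv)$ via the observation that for order ideals $A,B$ of $\mathcal{P}$, $A\cap B_M=\emptyset \iff A_M\cap B=\emptyset$ — a small self-dual lemma about posets which I would isolate and prove by a short order-chasing argument.

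The main obstacle, then, is exactly this self-dual combinatorial lemma for order ideals: \emph{if $A,B\in\mathcal{I}(\mathcal{P})$ then $A\cap B_M=\emptyset$ iff $A_M\cap B=\emptyset$.} I expect the proof to go: suppose $x\in A_M\cap B$; since $x$ is nonmaximal in $A$ there is $y\in M(A)$ with $x\prec y$; I want to produce an element of $A\cap B_M$, and the candidate is to find $z\in B$ nonmaximal in $B$ lying in $A$ — one takes a maximal chain in $B$ through $x$ and its top element, but one must check that the relevant witness lands in $A$, which uses that $A$ is an order ideal. Once this lemma is in hand, $(ii)\Leftrightarrow(iii)\Leftrightarrow(iv)$ follow by applying it with suitable choices of $A,B$ among $I,J^c$ and their duals, and $(i)\Leftrightarrow(ii)$ is the easy realizability argument from Lemma~\ref{sphere}. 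Finally $(iv)\Rightarrow(i)$ (closing the cycle) is immediate since $I_M\cap J^c=\emptyset$ gives, for any $u\in S_I$, $\mathrm{supp}(u)\subseteq I$ and $\mathrm{supp}(u)\cap(J^c)_M\subseteq I\cap J^c$... so one needs $I\cap(J^c)_M=\emptyset$, i.e.\ $(iii)$, which is equivalent to $(iv)$ — hence the cycle closes. I would present the argument as: prove the self-dual lemma, deduce $(ii)\Leftrightarrow(iii)\Leftrightarrow(iv)$ from it together with the fact that $(J^c)_M$ and $M(I)$ relate to $I,J$ as stated, and handle $(i)\Leftrightarrow(iii)$ (equivalently $(i)\Leftrightarrow(ii)$) separately using Lemma~\ref{sphere}.
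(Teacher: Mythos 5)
Your overall architecture --- a cycle of implications driven by the two facts that $M(I)\subseteq\mathrm{supp}(u)\subseteq I$ for $u\in S_I$ and that every nonmaximal element of an order ideal lies below some maximal element of it --- is exactly the paper's, and your concrete arguments for $(i)\Rightarrow(ii)$ and $(ii)\Rightarrow(iii)$ coincide with the paper's contrapositive order chases (the latter using that $(J^c)_M$ is an order ideal of $\mathcal{P}^*$). The gap is in the step you yourself single out as the crux. The ``self-dual lemma'' you isolate, namely that for $A,B\in\mathcal{I}(\mathcal{P})$ one has $A\cap B_M=\emptyset$ if and only if $A_M\cap B=\emptyset$, is false as stated: on the chain $1\prec 2$ take $A=\{1\}$ and $B=\{1,2\}$; then $A_M=\emptyset$, so $A_M\cap B=\emptyset$, while $A\cap B_M=\{1\}\neq\emptyset$. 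What you need (and what is true) is the mixed version in which $A\in\mathcal{I}(\mathcal{P})$, $B\in\mathcal{I}(\mathcal{P}^*)$, and $B_M$ is computed in $\mathcal{P}^*$; applied with $A=I$, $B=J^c$ this is literally the equivalence $(iii)\Leftrightarrow(iv)$, so stating it buys nothing beyond what must be proved anyway.

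Your proof sketch for that lemma also misidentifies the witness: it does not come from ``a maximal chain in $B$ through $x$'' but from $M(A)$ (respectively $M(B)$). Concretely, if $x\in A_M\cap B$, pick $y\in M(A)$ with $x\prec_{\mathcal{P}}y$; then $y\in A$, and since $B$ is an order ideal of $\mathcal{P}^*$ containing $x$ and $y\preceq_{\mathcal{P}^*}x$, we get $y\in B$, and in fact $y\in B_M$ because $y$ lies strictly below $x\in B$ in $\mathcal{P}^*$; hence $y\in A\cap B_M$. The converse is the symmetric chase starting from $x\in A\cap B_M$ and a $\mathcal{P}^*$-maximal element of $B$ above $x$, which lands in $A_M\cap B$ because $A$ is an order ideal of $\mathcal{P}$. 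These two chases are precisely the paper's proofs of $(iii)\Rightarrow(iv)$ and $(iv)\Rightarrow(i)$. With the lemma restated in this dual-poset form your plan closes correctly; as written it does not.
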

\begin{proof}
$(i) \Rightarrow (ii)$ For $u \in S_I$, $M(I) \subseteq \mbox{supp}(u)$.
It follows that $M(I) \cap (J^c)_M \subseteq \mbox{supp}(u) \cap (J^c)_M$. Hence $(i)$ implies $(ii)$.\\
$(ii) \Rightarrow (iii)$ Note that $\{z \in M(I) \mid x \preceq z \mbox{ in } \mathcal{P}\} \neq \emptyset$
for $x \in I_M$ and $(J^c)_M$ is an order ideal of $\mathcal{P}^*$.
If $x \in I_M \cap (J^c)_M$, then $y \in M(I) \cap (J^c)_M$ for $y \in \{z \in M(I) \mid x \preceq z \mbox{ in } \mathcal{P}\}$.
Hence $(ii)$ implies $(iii)$.\\
$(iii) \Rightarrow (iv)$ If $x \in I_M \cap J^c$, then $y \in I \cap
(J^c)_M$
for $y \in \{z \in M(I) \mid x \preceq z \mbox{ in } \mathcal{P}\}$. Hence $(iii)$ implies $(iv)$.\\
$(iv) \Rightarrow (i)$ Note that $\{z \in M(J^c) \mid x \preceq z
\mbox{ in } \mathcal{P}^*\} \neq \emptyset$ for $x \in (J^c)_M$. If
$x \in \mbox{supp}(u) \cap (J^c)_M$, then $y \in I_M \cap J^c$ for
$y \in \{z \in M(J^c) \mid x \preceq z \mbox{ in } \mathcal{P}^*\}$.
Hence $(iv)$ implies $(i)$.
\end{proof}

We evaluate the sum of characters on the sphere of an order ideal.

\begin{lemma}\label{calculation of P_I(J)}
Let $\mathcal{P}$ be a poset on $[n]$. For $I, J \in \mathcal{I}(\mathcal{P})$ and $u\in S_I$, we have
\begin{eqnarray*}
\sum\limits_{v \in S_{J^c}}
                                                \chi(u \cdot v)=
\left\{\begin{array}{ll}    (-1)^{|I \cap J^c|}(q-1)^{|M(J^c)|-|I \cap J^c|}q^{|(J^c)_M|} & \mbox{if }I_M \cap J^c = \emptyset,\\
0 &\mbox{if }I_M \cap J^c \neq \emptyset. \end{array}\right.
\end{eqnarray*}
\end{lemma}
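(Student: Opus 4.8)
The plan is to compute the character sum by factoring it over the coordinate positions, using the explicit description of the sphere $S_{J^c}$ from Lemma~\ref{sphere} (applied in $\mathcal{P}^*$) together with the dichotomy of Lemma~\ref{support of u}. First I would write, for $u = (u_1,\dots,u_n) \in S_I$ and $v = (v_1,\dots,v_n) \in S_{J^c}$, the inner product as $u \cdot v = \sum_{i \in J^c} u_i v_i$, since Lemma~\ref{sphere} forces $v_i = 0$ for $i \notin J^c$. Partition $J^c$ (as an order ideal of $\mathcal{P}^*$) into its maximal elements $M(J^c)$ and its nonmaximal elements $(J^c)_M$; again by Lemma~\ref{sphere}, $v_i$ ranges freely over $\mathbb{F}_q$ for $i \in (J^c)_M$ and over $\mathbb{F}_q^*$ for $i \in M(J^c)$. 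Because $\chi$ is a character and the coordinates vary independently, the sum factors:
\begin{eqnarray*}
\sum_{v \in S_{J^c}} \chi(u \cdot v)
= \prod_{i \in (J^c)_M} \Bigl(\sum_{v_i \in \mathbb{F}_q} \chi(u_i v_i)\Bigr)
\cdot \prod_{i \in M(J^c)} \Bigl(\sum_{v_i \in \mathbb{F}_q^*} \chi(u_i v_i)\Bigr).
\end{eqnarray*}

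Next I would evaluate each local factor using the two standard character identities over $\mathbb{F}_q$: $\sum_{t \in \mathbb{F}_q} \chi(at) = q$ if $a = 0$ and $0$ otherwise, and hence $\sum_{t \in \mathbb{F}_q^*} \chi(at) = q-1$ if $a = 0$ and $-1$ otherwise. So a factor indexed by $i \in (J^c)_M$ contributes $q$ when $u_i = 0$ and $0$ when $u_i \neq 0$; a factor indexed by $i \in M(J^c)$ contributes $q-1$ when $u_i = 0$ and $-1$ when $u_i \neq 0$. Thus the whole product vanishes precisely when there is some $i \in (J^c)_M$ with $u_i \neq 0$, i.e. when $\mathrm{supp}(u) \cap (J^c)_M \neq \emptyset$; by the equivalence $(i) \Leftrightarrow (iv)$ of Lemma~\ref{support of u} this is exactly the condition $I_M \cap J^c \neq \emptyset$, giving the second case of the claimed formula.

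For the surviving case $I_M \cap J^c = \emptyset$, every factor from $(J^c)_M$ equals $q$, contributing $q^{|(J^c)_M|}$. For the factors from $M(J^c)$, the exponent of $-1$ counts the positions $i \in M(J^c)$ with $u_i \neq 0$, i.e. $|\mathrm{supp}(u) \cap M(J^c)|$, and the exponent of $(q-1)$ counts the positions $i \in M(J^c)$ with $u_i = 0$, i.e. $|M(J^c)| - |\mathrm{supp}(u) \cap M(J^c)|$. It remains to identify $|\mathrm{supp}(u) \cap M(J^c)|$ with $|I \cap J^c|$. Here I would argue: since $u \in S_I$ we have $M(I) \subseteq \mathrm{supp}(u) \subseteq I$ (Lemma~\ref{sphere}), and since we are in the case $I_M \cap J^c = \emptyset$, the part of $\mathrm{supp}(u)$ (and of $I$) lying in $J^c$ is forced to lie in $M(I)$, where $u$ is nonzero; combined with $M(I) \cap (J^c)_M = \emptyset$ (part $(ii)$ of Lemma~\ref{support of u}), one gets $\mathrm{supp}(u) \cap J^c = I \cap J^c = I \cap M(J^c) = \mathrm{supp}(u) \cap M(J^c)$, so all these counts coincide. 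Substituting yields $(-1)^{|I \cap J^c|}(q-1)^{|M(J^c)| - |I \cap J^c|} q^{|(J^c)_M|}$, as asserted.

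The routine parts are the character-sum evaluations and the factorization; the only place demanding care — and the main obstacle — is the bookkeeping in the last paragraph, namely checking that in the non-vanishing case the three sets $\mathrm{supp}(u) \cap J^c$, $I \cap J^c$, and $\mathrm{supp}(u) \cap M(J^c)$ all coincide, which is precisely where the various equivalent reformulations in Lemma~\ref{support of u} are needed so that the exponents come out independent of the particular choice of $u \in S_I$.
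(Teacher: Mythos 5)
Your proposal is correct and follows essentially the same route as the paper: factor the sum coordinatewise via the description of $S_{J^c}$ from Lemma~\ref{sphere}, evaluate the local character sums, and use Lemma~\ref{support of u} to convert the vanishing condition $\mathrm{supp}(u)\cap (J^c)_M\neq\emptyset$ into $I_M\cap J^c\neq\emptyset$ and to identify the exponents. You are in fact slightly more explicit than the paper in the final bookkeeping step showing $|\mathrm{supp}(u)\cap M(J^c)|=|I\cap J^c|$, which the paper dispatches with a single reference to Lemma~\ref{support of u}.
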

\begin{proof}
It follows from Lemma \ref{sphere} that
\begin{eqnarray}
\sum\limits_{v \in S_{J^c}}
                                                \chi(u \cdot v) &=& \sum\limits_{v \in S_{J^c}}
                                                \prod\limits_{i=1}\limits^{n}\chi(u_i  v_i)\nonumber\\
&=& \sum\limits_{v \in S_{J^c}}
                                                \prod\limits_{i \in M(J^c)}
                                                \chi(u_i  v_i)
                                                \prod\limits_{i \in J_M^c}
                                                \chi(u_i  v_i)
                                                \prod\limits_{i \in J}
                                                \chi(u_i  v_i)
                                                \nonumber\\
&=& \prod\limits_{i \in M(J^c)} \sum\limits_{\alpha \in F^*_q}\chi(u_i \alpha) \prod\limits_{i \in J^c_M} \sum\limits_{\alpha \in F_q}\chi(u_i  \alpha)\prod\limits_{i \in J} \chi(0)  \nonumber
\end{eqnarray}
Since
$
\sum_{\beta \in \mathbb{F}_q} \chi(\alpha \beta) =
\left\{\begin{array}{ll}
q & \mbox{if} \ \alpha = 0,\\
0 & \mbox{if} \ \alpha \neq 0,
\end{array}\right.
$
we have
\begin{eqnarray*}
& &\sum\limits_{v \in S_{J^c}} \chi(u \cdot v)\nonumber\\
&=& (-1)^{|supp(u) \cap M(J^c)|}(q-1)^{|supp(u)^c \cap M(J^c)|} 0^{|supp(u) \cap (J^c)_M|} q^{|supp(u)^c \cap (J^c)_M|}\nonumber\\
&=& \left\{\begin{array}{ll}    (-1)^{|supp(u) \cap M(J^c)|}(q-1)^{|supp(u)^c \cap M(J^c)|} q^{|(J^c)_M|} & \mbox{if }supp(u) \cap (J^c)_M = \emptyset,\\
0 &\mbox{if }supp(u) \cap (J^c)_M \neq \emptyset. \end{array}\right.\nonumber
\end{eqnarray*}
%Since $supp(u) \cap J^c_M = \emptyset$ if and only if $I_M \cap J^c = \emptyset$, the above summation becomes
The result follows from Lemma \ref{support of u}.
\end{proof}
In the following proposition, the entries of $P_E$ and $Q_{E^*}$ are
explicitly described.
\begin{proposition}\label{calculation of P_I(J)2}
Let $\mathcal{P}$ be a poset on $[n]$, $E$ an equivalence relation
on $\mathcal{I}(\mathcal{P})$ and $E^*$ the dual relation of $E$.
Then the entries of $P_E$ and $Q_{E^*}$ are presented as follows:

For $I, J \in \mathcal{I}(\mathcal{P})$ and $u\in S_I$, we have
\begin{eqnarray*}
(i) \ p_{\overline{J^c},\overline{I}}= (q-1)^{|M(J^c)|}q^{|(J^c)_M|}
\sum\limits_{K^c \in \overline{J^c}, I_M \cap K^c = \emptyset}
\left(\frac{-1}{q-1}\right)^{|I \cap K^c|} \mbox{ for }u\in
S_{\overline{I},E},
\end{eqnarray*}
\begin{eqnarray*}
(ii) \ q_{\overline{I},\overline{J^c}}=(q-1)^{|M(I)|}q^{|I_M|}
\sum\limits_{K \in \overline{I}, (J^c)_M \cap K = \emptyset}
\left(\frac{-1}{q-1}\right)^{|J^c \cap K|} \mbox{ for } v\in
S_{\overline{J^c},E^*}.
\end{eqnarray*}
\end{proposition}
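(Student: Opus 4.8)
The plan is to compute a single entry $p_{\overline{J^c},\overline I}$ of $P_E$ directly from its definition, reducing everything to the elementary character sum already evaluated in Lemma~\ref{calculation of P_I(J)}. Fix $I,J\in\mathcal I(\mathcal P)$ and $u\in S_I\subseteq S_{\overline I,E}$. Since $S_{\overline{J^c},E^*}=\bigcup^{\circ}_{K^c\in\overline{J^c}}S_{K^c}$ is a disjoint union,
\[
p_{\overline{J^c},\overline I}=\sum_{v\in S_{\overline{J^c},E^*}}\chi(u\cdot v)=\sum_{K^c\in\overline{J^c}}\ \sum_{v\in S_{K^c}}\chi(u\cdot v).
\]
First I would apply Lemma~\ref{calculation of P_I(J)} (with its second ideal taken to be $K$) to each inner sum: the summand indexed by $K^c$ vanishes unless $I_M\cap K^c=\emptyset$, and otherwise equals $(-1)^{|I\cap K^c|}(q-1)^{|M(K^c)|-|I\cap K^c|}q^{|(K^c)_M|}$. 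Factoring this as $(q-1)^{|M(K^c)|}q^{|(K^c)_M|}\bigl(\tfrac{-1}{q-1}\bigr)^{|I\cap K^c|}$ and recalling from Lemma~\ref{sphere} that $(q-1)^{|M(K^c)|}q^{|(K^c)_M|}=|S_{K^c}|$, we arrive at
\[
p_{\overline{J^c},\overline I}=\sum_{\substack{K^c\in\overline{J^c}\\ I_M\cap K^c=\emptyset}}|S_{K^c}|\left(\frac{-1}{q-1}\right)^{|I\cap K^c|}.
\]

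The remaining — and only nontrivial — step is to show that $|S_{K^c}|=(q-1)^{|M(K^c)|}q^{|(K^c)_M|}$ is the same for every $K^c$ in the class $\overline{J^c}$, hence equal to $(q-1)^{|M(J^c)|}q^{|(J^c)_M|}$, so that it may be pulled out of the sum; this yields formula $(i)$. This is where the MacWilliams-type hypothesis on $E$ (equivalently on $E^*$, since $E^{**}=E$) is really used. In each of the three families covered by Theorem~4.1 it is transparent: for $E_{H}$ two ideals in the same $E^*$-class differ by an automorphism of $\mathcal P^*$, for $E_S$ on a complement-isomorphism poset they are order-isomorphic by construction, and for $E_C$ on the posets classified in \cite{KO} the numbers of maximal and of non-maximal elements of an ideal depend only on its cardinality; in the first two cases an order isomorphism carries maximal elements bijectively onto maximal elements and therefore preserves both $|M(\cdot)|$ and $|(\cdot)_M|$. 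In the general statement one would isolate this as a short lemma, read off from the description of $E^*$ as the dual of $E$.

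For $(ii)$ I would repeat the three steps above with $\mathcal P$ and $\mathcal P^*$, and $E$ and $E^*$, interchanged: for $v\in S_{J^c}\subseteq S_{\overline{J^c},E^*}$ write $q_{\overline I,\overline{J^c}}=\sum_{u\in S_{\overline I,E}}\chi(u\cdot v)=\sum_{K\in\overline I}\sum_{u\in S_K}\chi(u\cdot v)$, apply the $\mathcal P^*$-version of Lemma~\ref{calculation of P_I(J)} (i.e.\ with $(\mathcal P,I,J)$ replaced by $(\mathcal P^*,J^c,K^c)$) to the inner sum, and pull the class-invariant factor $(q-1)^{|M(I)|}q^{|I_M|}=|S_I|$ out of the resulting sum over $\{K\in\overline I:(J^c)_M\cap K=\emptyset\}$. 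Because $(\mathcal P^*)^*=\mathcal P$ and $E^{**}=E$, nothing new has to be proved here beyond the symmetry observation.

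The main obstacle is thus the second paragraph: verifying that $(q-1)^{|M(K^c)|}q^{|(K^c)_M|}$ is constant along $E^*$-classes, so that it factors out cleanly. Everything else is a bookkeeping application of Lemma~\ref{calculation of P_I(J)} and the disjoint-union decomposition of $S_{\overline{J^c},E^*}$ (together with the mirror-image statements in $\mathcal P^*$ for part $(ii)$).
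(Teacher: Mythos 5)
Your argument is correct and is essentially the paper's own proof: decompose $S_{\overline{J^c},E^*}$ into the disjoint spheres $S_{K^c}$ over $K^c\in\overline{J^c}$, evaluate each inner sum by Lemma~\ref{calculation of P_I(J)}, pull out the common factor $(q-1)^{|M(J^c)|}q^{|(J^c)_M|}$, and obtain $(ii)$ by interchanging the roles of $\mathcal{P}$ and $\mathcal{P}^*$. The one step you isolate as nontrivial---the constancy of $|M(K^c)|$ and $|(K^c)_M|$ along the class $\overline{J^c}$---is precisely the step the paper asserts in a single line without justification, so your explicit discussion of why it holds for the relations of Theorem~\ref{automorphism group} is, if anything, more careful than the original.
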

\begin{proof}
It follows from Lemma \ref{calculation of P_I(J)} that
\begin{eqnarray}
\sum\limits_{v \in S_{\overline{J^c}, E^*}}
                                                \chi(u \cdot v)
&=& \sum\limits_{K^c \in \overline{J^c}} \sum\limits_{v \in S_{K^c}}
                                                \chi(u \cdot v)\nonumber\\
&=& \sum\limits_{K^c \in \overline{J^c}, I_M \cap K^c = \emptyset } (-1)^{|I \cap K^c|}(q-1)^{|M(K^c)|-|I \cap K^c|}q^{|(K^c)_M|}. \nonumber
\end{eqnarray}
From $
|M(K^c)|=|M(J^c)|\mbox{ and }|(K^c)_M|=|(J^c)_M|$ for $K^c \in \overline{J^c}$, we obtain $(i)$. In the same way, we can obtain $(ii)$.
\end{proof}

We prove that $P_E$ is uniquely determined by $Q_{E^*}$ and vice
versa.
\begin{proposition}\label{relation of p and q}
Let $\mathcal{P}$ be a poset on $[n]$, $E$ an equivalence relation
on $\mathcal{I}(\mathcal{P})$ and $E^*$ the dual relation of $E$. If
$E$ is a MacWilliams-type equivalence relation on
$\mathcal{I}(\mathcal{P})$, then
\begin{eqnarray*}
\frac{|\overline{I}|}{(q-1)^{|M(J^c)|}q^{|(J^c)_M|}}{p_{\overline{J^c}, \overline{I}}} = \frac{|\overline{J^c}|}{(q-1)^{|M(I)|}q^{|I_M|}} {q_{\overline{I},\overline{J^c}}}, \nonumber
\end{eqnarray*}
for $\overline{I} \in \mathcal{I}(\mathcal{P}) / E$ and $\overline{J^c} \in \mathcal{I}(\mathcal{P^*}) / E^*$.
\end{proposition}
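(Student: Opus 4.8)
The plan is to substitute the explicit entry formulas of Proposition~\ref{calculation of P_I(J)2} into the claimed identity, reduce it to an elementary symmetry between two double sums indexed by $\overline{I}\times\overline{J^c}$, and then extract that symmetry from Lemma~\ref{support of u}. Throughout, fix the representatives $I\in\overline{I}$ and $J^c\in\overline{J^c}$ appearing in the statement, and abbreviate $c:=\tfrac{-1}{q-1}$.

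First I would rewrite the left side. Since $E$ is MacWilliams-type, the matrix $P_E$ exists, and for each representative $A\in\overline{I}$ (choosing a vector $u\in S_A\subseteq S_{\overline{I},E}$) Proposition~\ref{calculation of P_I(J)2}$(i)$ gives
\[
p_{\overline{J^c},\overline{I}}=(q-1)^{|M(J^c)|}q^{|(J^c)_M|}\sum_{\substack{B^c\in\overline{J^c}\\ A_M\cap B^c=\emptyset}}c^{\,|A\cap B^c|};
\]
the point is that the left side does not depend on $A$, only on $\overline{I}$. Dividing by the nonzero prefactor and summing over $A\in\overline{I}$ yields
\[
\frac{|\overline{I}|}{(q-1)^{|M(J^c)|}q^{|(J^c)_M|}}\,p_{\overline{J^c},\overline{I}}=\sum_{A\in\overline{I}}\ \sum_{\substack{B^c\in\overline{J^c}\\ A_M\cap B^c=\emptyset}}c^{\,|A\cap B^c|}.
\]
Applying Proposition~\ref{calculation of P_I(J)2}$(ii)$ with each representative $B^c\in\overline{J^c}$ in place of $J^c$ and summing over $B^c$ gives, in exactly the same way,
\[
\frac{|\overline{J^c}|}{(q-1)^{|M(I)|}q^{|I_M|}}\,q_{\overline{I},\overline{J^c}}=\sum_{B^c\in\overline{J^c}}\ \sum_{\substack{A\in\overline{I}\\ (B^c)_M\cap A=\emptyset}}c^{\,|B^c\cap A|}.
\]

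It then remains to check that the two right-hand sides coincide. Interchanging the order of summation, both are sums over the pairs $(A,B^c)\in\overline{I}\times\overline{J^c}$: one with summand $c^{\,|A\cap B^c|}$ over the pairs satisfying $A_M\cap B^c=\emptyset$, the other with summand $c^{\,|B^c\cap A|}$ over the pairs satisfying $(B^c)_M\cap A=\emptyset$. The summands agree since $|A\cap B^c|=|B^c\cap A|$, and the two index sets agree because Lemma~\ref{support of u}, applied to the order ideals $A$ and $B$ of $\mathcal{P}$ (so that $B^c\in\mathcal{I}(\mathcal{P}^*)$), gives $A_M\cap B^c=\emptyset\iff A\cap(B^c)_M=\emptyset$, that is, $\iff(B^c)_M\cap A=\emptyset$. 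Hence the two double sums coincide term by term, which is precisely the asserted identity.

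The only step that requires genuine care, and the single place where the hypothesis is used, is the passage from one representative to a sum over the whole equivalence class: this is legitimate exactly because $E$ being MacWilliams-type makes $P_E$ and $Q_{E^*}$ well defined, so that the value $p_{\overline{J^c},\overline{I}}$ (resp. $q_{\overline{I},\overline{J^c}}$) furnished by Proposition~\ref{calculation of P_I(J)2} is the same for every representative $A\in\overline{I}$ (resp. $B^c\in\overline{J^c}$). Everything beyond that is bookkeeping — no estimate or nontrivial computation enters past Proposition~\ref{calculation of P_I(J)2} and the chain of equivalences in Lemma~\ref{support of u}.
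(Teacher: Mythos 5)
Your proof is correct and takes essentially the same route as the paper's: both use the representative-independence of $p_{\overline{J^c},\overline{I}}$ and $q_{\overline{I},\overline{J^c}}$ (which is exactly what the MacWilliams-type hypothesis supplies via Theorem~\ref{S.M.I.2}) to identify $|\overline{I}|\,p_{\overline{J^c},\overline{I}}/\bigl((q-1)^{|M(J^c)|}q^{|(J^c)_M|}\bigr)$ and $|\overline{J^c}|\,q_{\overline{I},\overline{J^c}}/\bigl((q-1)^{|M(I)|}q^{|I_M|}\bigr)$ with one and the same double sum over $\overline{I}\times\overline{J^c}$, with Lemma~\ref{support of u} reconciling the two emptiness conditions. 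The only difference is presentational: the paper transforms the $p$-side sum step by step into the $q$-side sum, while you exhibit the common symmetric double sum explicitly.
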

\begin{proof}
Since $E$ is a MacWilliams-type equivalence relation on $\mathcal{I}(\mathcal{P})$, we have
$$
p_{_{\overline{J^c}, \overline{I}}} = \sum_{v \in S_{\overline{J^c},E^*} } \chi(u \cdot v) \mbox{ for }u \in S_{\overline{I},E}
 \mbox{ and }
q_{_{\overline{I}, \overline{J^c}}} = \sum_{u \in S_{\overline{I},E} } \chi(u \cdot v) \mbox{ for }v \in S_{\overline{J^c},E^*}.
$$
It follows from Lemmas \ref{support of u} and Proposition
\ref{calculation of P_I(J)2} that
\begin{eqnarray*}
\frac{p_{\overline{J^c}, \overline{I}}}{(q-1)^{|M(J^c)|}q^{|(J^c)_M|}} &=& \sum\limits_{K^c \in \overline{J^c}, {I} \cap (K^c)_M = \emptyset}
\left(\frac{-1}{q-1}\right)^{|I \cap K^c|}\nonumber\\
&=& \frac{1}{|\overline{I}|}\sum\limits_{L \in \overline{I}} \sum\limits_{K^c \in \overline{J^c}} \sum\limits_{{L} \cap (K^c)_M = \emptyset}
\left(\frac{-1}{q-1}\right)^{|I \cap K^c|}\nonumber\\
&=& \frac{|\overline{J^c}|}{|\overline{I}|}\sum\limits_{L \in \overline{I},{L} \cap (J^c)_M = \emptyset}
\left(\frac{-1}{q-1}\right)^{|I \cap K^c|}\nonumber\\
&=& \frac{|\overline{J^c}|}{|\overline{I}|} \frac{q_{\overline{I},\overline{J^c}}}{(q-1)^{|M(I)|}q^{|I_M|}}\nonumber.
\end{eqnarray*}
Multiplying ${|\overline{I}|}$ on both sides, the result follows.
\end{proof}

\begin{example}\label{example4}
Let $\mathcal{P}$ be an antichain on $[n]$ and $E_C$ an equivalence
relation on $\mathcal{I}(\mathcal{P})$ defined by the cardinality.
We see that
$$
S_{\overline{I},E_C} = \{u \in \mathbb{F}^n_q \mid
w_{\mathcal{P}}(u)= |I| \} \mbox{ and } S_{\overline{J^c},E_C^*} =
\{v \in \mathbb{F}^n_q \mid w_{\mathcal{P}^*}(v) = |J^c| \},
$$
for $\overline{I} \in \mathcal{I}(\mathcal{P}) / E_C$ and $\overline{J^c} \in \mathcal{I}(\mathcal{P^*}) / E_C^*$.
It follows from Lemma 6.17 in \cite{MS} that
\begin{eqnarray*}
p_{_{\overline{J^c}, \overline{I}}} = P_{|J^c|}(|I|;n) \mbox{ and } q_{_{\overline{I}, \overline{J^c}}} = P_{|I|}(|J^c|;n),
\end{eqnarray*}
where $P_{k}(x;n) := \sum\limits_{j=0}^{k} (-1)^j (q-1)^{k-j}
{{x}\choose{j}} {{n-x}\choose{k-j}}$, $k=0,1,\ldots,n$, is the
Krawtchouk polynomial. Therefore, we have $P_{E_C} = Q_{E^*_C}^{T}$.
It follows from Proposition \ref{relation of p and q} that
\begin{eqnarray*}
\frac{|\overline{I}|}{(q-1)^{|M(J^c)|}q^{|(J^c)_M|}}{p_{\overline{J^c}, \overline{I}}} = \frac{|\overline{J^c}|}{(q-1)^{|M(I)|}q^{|I_M|}} {q_{\overline{I},\overline{J^c}}}. \nonumber
\end{eqnarray*}
Since $|\overline{I}| = {{n}\choose{|I|}}$, $|\overline{J^c}| =
{{n}\choose{|J^c|}}$, $M(I) = I$, $M(J^c) = J^c$, and $I_M = (J^c)_M
= \emptyset$, we see that
\begin{eqnarray*}
\frac{{{n}\choose{|I|}}}{(q-1)^{|J^c|}}{p_{\overline{J^c},
\overline{I}}} = \frac{{{n}\choose{|J^c|}}}{(q-1)^{|I|}}
{q_{\overline{I},\overline{J^c}}}. \nonumber
\end{eqnarray*}
This coincides with Theorem 5.17 in \cite{MS}.
\end{example}

%**********************************************************************
\section{Three sources of MacWilliams-type equivalence relations}
%**********************************************************************

In this section, We provide three kinds of equivalence relations of
a MacWilliams-type, that is, equivalence relations defined by the
cardinality on the set of order ideals of a poset, the automorphism
of a poset and the order isomorphism on $\mathcal{I}(\mathcal{P})$
of a complement isomorphism poset. Moreover we classify posets
admitting such equivalence relations to be a MacWilliams-type
on $\mathcal{I}(\mathcal{P})$.\\

%\begin{lemma}\label{auto}
%Let $A$ and $B$ be subsets of a poset $\mathcal{P}$ on $[n]$, and $H$ a subgroup of $\textrm{Aut}(\mathcal{P})$. Then\\
%$(i)$ $\sigma(A \cap B) = \sigma(A) \cap \sigma(B)$ for all $\sigma\in \textrm{Aut}(\mathcal{P})$.\\
%$(ii)$ $\textrm{Aut}(\mathcal{P}^*) = \textrm{Aut}(\mathcal{P})$,
%and $H$ is also a subgroup of $\textrm{Aut}(\mathcal{P^*})$.
%\end{lemma}
%\begin{proof}
%$(i)$ Since $j \in \sigma(A) \cap \sigma(B)$ if and only if
%$\sigma^{-1}(j) \in A \cap B$, we have $\sigma(A \cap B) = \sigma(A)
%\cap \sigma(B)$. $(ii)$ Since every automorphism of a poset
%preserves the order relation, the result follows.
%\end{proof}
Let $f$ and $g$ be functions on the subsets of a finite set $X$. It
is known \cite{S1} that
\begin{align*}
f(A) = \sum\limits_{B \subseteq A}g(B)\mbox{ for }A \subseteq X
\mbox{ if and only if } g(A) = \sum\limits_{B \subseteq
A}(-1)^{|A|-|B|}f(B)\mbox{ for }A \subseteq X,
\end{align*}
which is called the M$\ddot{o}$bius inversion formula.\\

Now we are ready to state our main result of this section for
classifying posets admitting a MacWilliams-type equivalence
relation.

\begin{theorem}\label{automorphism group}
Let $\mathcal{P}$ be a poset on $[n]$ and $H$ a subgroup of
Aut$(\mathcal{P})$.\\
$(i)$ $E_H$ is a MacWilliams-type equivalence relation on
$\mathcal{I}(\mathcal{P})$.\\
$(ii)$ The following statements are equivalent.\\
\mbox{ } \ \ \ $(a)$ $\mathcal{P}$ is a hierarchical poset.\\
\mbox{ } \ \ \ $(b)$ $E_C$ is a MacWilliams-type equivalence relation on $\mathcal{I}(\mathcal{P})$.\\
\mbox{ } \ \ \ $(c)$ Two equivalence relations $E_{C}$ and
$E_{Aut(\mathcal{P})}$
are the same.\\
$(iii)$ The following statements are equivalent.\\
\mbox{ } \ \ \ $(a)$ $\mathcal{P}$ is a complement isomorphism poset.\\
\mbox{ } \ \ \ $(b)$ $E_S$ is a MacWilliams-type equivalence relation on
$\mathcal{I}(\mathcal{P})$.
\end{theorem}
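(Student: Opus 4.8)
The plan is to verify in each case the criterion of Theorem \ref{S.M.I.2}$(ii)$: that the character sum $\sum_{v \in S_{\overline{J^c},E^*}} \chi(u\cdot v)$ depends only on the class $\overline{I}$ of $u$ (condition $(a)$), and symmetrically for condition $(b)$. By Lemma \ref{calculation of P_I(J)}, for $u \in S_I$ the inner sums $\sum_{v\in S_{K^c}}\chi(u\cdot v)$ depend only on the triple of quantities $|M(K^c)|$, $|(K^c)_M|$, $|I\cap K^c|$ and on whether $I_M\cap K^c=\emptyset$; thus for $(a)$ it suffices that, as $K^c$ ranges over a fixed class $\overline{J^c}$, the multiset of values $|I\cap K^c|$ (among those $K^c$ with $I_M\cap K^c=\emptyset$) is unchanged when $I$ is replaced by any $I'$ with $(I,I')\in E$. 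For $(i)$, where $E=E_H$ and $(I,I')\in E_H$ means $I'=\sigma(I)$ for some $\sigma\in H$, the dual relation $E_H^*$ is $K^c \mapsto \sigma(K^c)$ (Lemma 2.2$(ii)$); since $\sigma$ is a bijection of $[n]$, $|\sigma(I)\cap\sigma(K^c)| = |I\cap K^c|$ and $\sigma(I)_M\cap\sigma(K^c)=\sigma(I_M\cap K^c)$, so the map $K^c\mapsto\sigma(K^c)$ is a bijection of $\overline{J^c}$ onto itself preserving all relevant data. This gives $(a)$; $(b)$ is identical by symmetry, proving $(i)$.

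For $(ii)$, the implication $(a)\Rightarrow(c)$ is Remark 2.6$(ii)$, and $(c)\Rightarrow(b)$ follows from $(i)$ with $H=\mathrm{Aut}(\mathcal{P})$. The substantive direction is $(b)\Rightarrow(a)$: assuming $E_C$ is MacWilliams-type, I would show $\mathcal{P}$ is hierarchical. Here I would exploit Theorem \ref{S.M.I.2}$(ii)(a)$ with the explicit formula from Proposition \ref{calculation of P_I(J)2}$(i)$: the requirement is that $(q-1)^{|M(J^c)|}q^{|(J^c)_M|}\sum_{K^c\in\overline{J^c},\,I_M\cap K^c=\emptyset}\bigl(\tfrac{-1}{q-1}\bigr)^{|I\cap K^c|}$ depends only on $|I|$. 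If $\mathcal{P}$ is not hierarchical there exist two ideals $I$, $I'$ of the same cardinality but with $|M(I)|\neq|M(I')|$ or $|I_M|\neq|I'_M|$; I would choose a suitable class $\overline{J^c}$ — most cleanly one containing a single ideal $K^c$ (e.g.\ coming from the complement of a maximal ideal, forcing the sum to reduce to one term, or $\overline{J^c}$ the top class) — so that the two character sums visibly differ because the factor $(q-1)^{|M(J^c)|}$ against $(\tfrac{-1}{q-1})^{|I\cap K^c|}$ cannot coincide. This is essentially the argument of \cite{KO}; the present framework just repackages it.

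For $(iii)$, the direction $(a)\Rightarrow(b)$: if $\mathcal{P}$ is a complement isomorphism poset then $I\simeq J\iff I^c\simeq J^c$, so $E_S^*$ genuinely exists and is given by $K^c\mapsto L^c$ whenever $K^c\simeq L^c$ as posets of $\mathcal{P}^*$. To verify Theorem \ref{S.M.I.2}$(ii)$, fix $I\simeq I'$ via a poset isomorphism $\phi: I\to I'$. One checks $\phi$ carries $M(I)$ to $M(I')$ and $I_M$ to $I'_M$, so $|M(I)|=|M(I')|$ and $|I_M|=|I'_M|$; the remaining task is to produce, for each $K^c\in\overline{J^c}$ with $I_M\cap K^c=\emptyset$, a matching $L^c\in\overline{J^c}$ with $I'_M\cap L^c=\emptyset$ and $|I'\cap L^c|=|I\cap K^c|$, and vice versa, giving a bijection of the relevant index sets. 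I would build $L^c$ from $K^c$ by transporting along $\phi$ on the overlap region and using the complement-isomorphism hypothesis to extend it to a full ideal of $\mathcal{P}^*$ isomorphic to $K^c$; the cardinality $|I'\cap L^c|=|\phi(I\cap K^c)|=|I\cap K^c|$ is then automatic. Then Proposition \ref{calculation of P_I(J)2}$(i)$ shows $p_{\overline{J^c},\overline{I}}$ depends only on $\overline{I}$, and symmetrically for $(b)$, so $E_S$ is MacWilliams-type. For $(b)\Rightarrow(a)$: if $\mathcal{P}$ is \emph{not} a complement isomorphism poset there are $I\simeq J$ with $I^c\not\simeq J^c$; as Example \ref{example1} illustrates, the dual relation $E_S^*$ is then not even well-defined in the sense of Definition \ref{definition2} (the property "$\simeq$ as posets" fails to transfer to complements), so $E_S$ cannot be MacWilliams-type — one exhibits explicit $1$-dimensional codes $\mathcal{C}_1,\mathcal{C}_2$ with equal $E_S$-weight distributions but, via Corollary \ref{definition of P_I(J)2}, unequal dual distributions, using vectors supported on $I$ and $J$ respectively. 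The main obstacle I anticipate is the bookkeeping in $(a)\Rightarrow(b)$ of part $(iii)$: exhibiting the right bijection between $\{K^c\in\overline{J^c}: I_M\cap K^c=\emptyset\}$ and its primed analogue, controlled simultaneously by disjointness from $I_M$ and by the intersection cardinality — this is where the complement-isomorphism hypothesis must be used in a nontrivial way rather than just quoting Lemma \ref{calculation of P_I(J)}.
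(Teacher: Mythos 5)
Your parts $(i)$ and $(ii)$ follow the paper's proof essentially verbatim: re-indexing the sum of Proposition \ref{calculation of P_I(J)2} along $\sigma\in H$ for $(i)$, and for $(ii)$ the chain $(a)\Rightarrow(c)\Rightarrow(b)$ together with deferring $(b)\Rightarrow(a)$ to \cite{KO} — the paper does exactly the same, so there is no issue there. The direction $(iii)(b)\Rightarrow(a)$ is also workable, although the paper takes the codes $\mathcal{C}_i=\{x\mid \mathrm{supp}(x)\subseteq I_i\}$ of dimension $|I_i|$ rather than one-dimensional ones, which lets it read off the dual weight distributions directly from Lemma \ref{sphere} instead of having to evaluate character sums via Corollary \ref{definition of P_I(J)2}.

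The genuine gap is in $(iii)(a)\Rightarrow(b)$, precisely where you anticipated trouble. Your plan is to construct, for each $K^c\in\overline{J^c}$ with $I\cap K^c=B\subseteq M(I)$, a partner $L^c\in\overline{J^c}$ with $I'\cap L^c=\phi(B)$ by ``transporting along $\phi$ and extending.'' The complement-isomorphism hypothesis does give a poset isomorphism $\psi_B:(I\setminus B)^c\to(I'\setminus\phi(B))^c$, and $L^c:=\psi_B(K^c)$ is an ideal of $\mathcal{P}^*$ isomorphic to $K^c$ satisfying $I'\cap L^c\subseteq\phi(B)$; but nothing forces $\psi_B$ to carry $B$ onto $\phi(B)$, so this inclusion can be strict and the required equality $|I'\cap L^c|=|I\cap K^c|$ fails. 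The hypothesis only controls isomorphism types of whole complements, not this pointed intersection data, so there is no canonical bijection preserving the \emph{exact} intersection. The paper's missing idea is a M\"obius inversion: rewrite the sum over $\{K^c\in\overline{J^c}\mid I\cap K^c=A\}$ by inclusion--exclusion in terms of the containment counts $N(B):=\#\{K^c\in\overline{J^c}\mid I\cap K^c\subseteq B\}=\#\{K^c\in\overline{J^c}\mid K^c\subseteq(I\setminus B)^c\}$. The latter is the number of order ideals of the induced poset $(I\setminus B)^c$ lying in a fixed isomorphism class, hence an isomorphism invariant, and $(I\setminus B)^c\simeq(I'\setminus\phi(B))^c$ gives $N(B)=N'(\phi(B))$. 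Equality of the exact-intersection counts (and therefore, abstractly, the existence of your bijection) then follows by inversion — but it is obtained by counting, not by construction. Without this step your proof of $(iii)(a)\Rightarrow(b)$ does not close.
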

\begin{proof} $(i)$ Note that $(I,J) \in E_H \textrm{ if and only if
}(I^c,J^c) \in E^*_H$. For $u$ and $u'$ in $S_{\overline{I},E_H}$,
let $I_1=\langle \mbox{supp}(u) \rangle_{\mathcal{P}}$ and
$I_2=\langle \mbox{supp}(u') \rangle_{\mathcal{P}}$. There is an
automorphism $\sigma$ in $H$ such that $\sigma(I_1)=I_2$. Let $J \in
\mathcal{I}(\mathcal{P})$. It follows from Proposition
\ref{calculation of P_I(J)2} that
\begin{eqnarray}
\sum\limits_{v \in S_{\overline{J^c},E_H^*}} \chi(u \cdot v) =
(q-1)^{|M(J^c)|}q^{|(J^c)_M|} \sum\limits_{K^c \in \overline{J^c},
{(I_1)}_M \cap K^c = \emptyset} \left(\frac{-1}{q-1}\right)^{|I_1
\cap K^c|}.\nonumber
\end{eqnarray}
It can be easily checked that for $A,B\subseteq\mathcal{P}$, we
obtain $\sigma(A \cap B) = \sigma(A) \cap \sigma(B)$ for all
$\sigma\in \textrm{Aut}(\mathcal{P})$ and
$\textrm{Aut}(\mathcal{P}^*) = \textrm{Aut}(\mathcal{P})$. It then
follows that
\begin{eqnarray*}
\sum\limits_{v \in S_{\overline{J^c},E_H^*}} \chi(u \cdot v)  &=& (q-1)^{|M(J^c)|}q^{|J_M^c|}
\sum\limits_{\sigma(K^c) \in \overline{J^c}, \sigma({(I_1)}_M \cap K^c)
= \emptyset}  \left(\frac{-1}{q-1}\right)^{|\sigma(I_1 \cap K^c)|}\nonumber\\
&=& (q-1)^{|M(J^c)|}q^{|J_M^c|} \sum\limits_{\sigma(K^c) \in \overline{J^c}, {(I_2)}_M \cap \sigma(K^c)
= \emptyset} \left(\frac{-1}{q-1}\right)^{|I_2 \cap \sigma(K^c)|}\nonumber\\
&=& \sum\limits_{v \in S_{\overline{J^c},E_H^*}} \chi(u' \cdot
v).\nonumber
\end{eqnarray*}
This proves Theorem \ref{S.M.I.2} $(ii)$ $(a)$. Note that $(I^c,J^c) \in
E^*_H \textrm{ if and only if }\sigma(I^c)=J^c$ for some $\sigma \in
H$. Theorem \ref{S.M.I.2} $(ii)$ $(b)$ is proved in the same argument as
above. This proves part $(i)$\\
$(ii)$ $(a) \Rightarrow (b)$ Since $\mathcal{P}$ is a hierarchical
poset, there is an automorphism $\sigma$ in $Aut(\mathcal{P})$
satisfying $\sigma(\langle \mbox{supp}(u) \rangle_{\mathcal{P}})
=\langle \mbox{supp}(u') \rangle_{\mathcal{P}}$ for $u$ and $u' \in
S_{\overline{I},E_C}$. The result is proved by Theorem \ref{S.M.I.2}
as in the proof of Theorem \ref{automorphism group}. $(a)
\Rightarrow (c)$ Let $\mathcal{P}$ be a hierarchical poset. From
the structure of $\mathcal{P}$, it is easily shown that for $I$ and
$J \in \mathcal{I}(\mathcal{P})$, $|I|=|J|$ if and only if there is
an element $\sigma$ in $Aut(\mathcal{P})$ satisfying $\sigma(I)=J$.
Hence $(a)$ implies $(c)$. $(b) \Rightarrow (a)$ It follows from
Theorem 2.5 in \cite{KO}. $(c) \Rightarrow (b)$ It follows from
Theorem \ref{automorphism group}.\\
$(iii)$ $(a) \Rightarrow (b)$ Suppose $\mathcal{P}$ is a complement
isomorphism poset. For $u,u' \in S_{\overline{I},E_S}$, there exists
an order isomorphism satisfying $\sigma(\langle \mbox{supp}(u)
\rangle_{\mathcal{P}})=\langle \mbox{supp}(u')
\rangle_{\mathcal{P}}$. Put $I_1 = \langle supp(u)
\rangle_\mathcal{P}$ and $I_2 = \langle supp(u')
\rangle_\mathcal{P}$. It follows from Lemma \ref{support of u} and
Proposition \ref{calculation of P_I(J)2} that for $\overline{J^c}
\in \mathcal{I}(\mathcal{P^*}) / E_S^*$,
\begin{eqnarray}
\sum\limits_{v \in S_{\overline{J^c},E_S^*} } \chi(u \cdot v) &=&
(q-1)^{|M(J^c)|}q^{|(J^c)_M|} \sum\limits_{K^c \in \bar{J^c}, {I_1}
\cap K^c \subseteq M(I_1)} \left(\frac{-1}{q-1}\right)^{|I_1 \cap
K^c|}.\nonumber
\end{eqnarray}
Replacing $I_1 \cap K^c$ by $A$, we have
\begin{eqnarray}
\sum\limits_{v \in S_{\overline{J^c},E_S^*} } \chi(u \cdot v) &=&
(q-1)^{|M(J^c)|}q^{|(J^c)_M|} \sum\limits_{A \subseteq M(I_1)}
\left(\frac{-1}{q-1}\right)^{|A|} \sum\limits_{K^c \in
\overline{J^c}, I_1 \cap K^c = A} 1. \nonumber
\end{eqnarray}
Applying the M$\ddot{o}$bius inversion formula, we obtain
\begin{eqnarray}
\frac{\sum\limits_{v \in S_{\overline{J^c},E_S^*} } \chi(u \cdot
v)}{(q-1)^{|M(J^c)|}q^{|(J^c)_M|}} = \sum\limits_{A \subseteq
M(I_1)} \left(\frac{-1}{q-1}\right)^{|A|} \sum\limits_{B \subseteq
A} (-1)^{|A \setminus B|}\sum\limits_{K^c \in \overline{J^c}, I_1
\cap K^c \subseteq B} 1. \nonumber
\end{eqnarray}
Let $B \subseteq A \subseteq M(I_1)$. One can easily check that
$|A|=|\sigma(A)|$, $|A \setminus B|=|\sigma(A) \setminus
\sigma(B)|$, and $(I_1 \setminus B,I_2 \setminus \sigma(B)) \in E_S$
since $\sigma : I_1 \rightarrow I_2$ is an order isomorphism. Since
$\mathcal{P}$ is a complement isomorphism poset, $(I_1\setminus B)^c
\simeq (I_2\setminus \sigma(B))^c$. Hence we have
$$
\sum\limits_{K^c \in \overline{J^c}, I_1 \cap K^c \subseteq B} 1 =
\sum\limits_{K^c \in \overline{J^c}, K^c \subseteq (I_1 \setminus
B)^c} 1 = \sum\limits_{K^c \in \overline{J^c}, K^c \subseteq (I_2
\setminus \sigma(B))^c} 1 = \sum\limits_{K^c \in \overline{J^c}, I_2
\cap K^c \subseteq \sigma(B)} 1.
$$
It follows that
\begin{eqnarray}
& &\frac{1}{(q-1)^{|M(J^c)|}q^{|(J^c)_M|}}\sum\limits_{v \in S_{\overline{J^c},E_S^*}} \chi(u \cdot v) \nonumber \\
&=&  \sum\limits_{A \subseteq M(I_1)}
\left(\frac{-1}{q-1}\right)^{|A|}
\sum\limits_{B \subseteq A} (-1)^{|A \setminus B|}\sum\limits_{K^c \in \overline{J^c}, I_1 \cap K^c \subseteq B} 1 \nonumber\\
&=&  \sum\limits_{\sigma(A) \subseteq M(I_2)}
\left(\frac{-1}{q-1}\right)^{|\sigma(A)|}
  \sum\limits_{\sigma(B) \subseteq \sigma(A)} (-1)^{|\sigma(A) \setminus \sigma(B)|}
 \sum\limits_{K^c \in \overline{J^c}, I_2 \cap K^c \subseteq \sigma(B)} 1\nonumber\\
&=& \frac{1}{(q-1)^{|M(J^c)|}q^{|(J^c)_M|}}\sum\limits_{v \in
S_{\overline{J^c},E_S^*}} \chi(u' \cdot v).\nonumber
\end{eqnarray}
%In the same way, we can prove $(d)$.\\
%$(2) \Rightarrow (1)$ It follows from Lemma \ref{C.I.property2}.
This proves Theorem \ref{S.M.I.2} $(ii)$ $(a)$. Since Theorem \ref{S.M.I.2}
$(ii)$ $(b)$ can be proved in the same way, the result follows.\\
$(b) \Rightarrow (a)$ Suppose $\mathcal{P}$ is not a complement
isomorphism poset. Then there are $I_1$ and $I_2$ on
$\mathcal{I}(\mathcal{P})$ such that $I_1 \simeq I_2$ and $I_1^c
\not\simeq I_2^c$. Let $\mathcal{C}_1$ and $\mathcal{C}_2$ be linear
codes of $\mathbb{F}_q^{n}$ such that
\begin{eqnarray*}
\mathcal{C}_i = \{x \in \mathbb{F}_q^{n}  \vert \mbox{ supp}(x) \subseteq I_{i}\}, \quad i=1,2.
\end{eqnarray*}
It follows that $W(\mathcal{C}_1,\mathcal{P},E_S)=W(\mathcal{C}_2,\mathcal{P},E_S)$.
The dual codes $\mathcal{C}_1^{\perp}$ and $\mathcal{C}_2^{\perp}$ are given by
\begin{eqnarray*}
\mathcal{C}^{\perp}_i = \{x \in \mathbb{F}_q^{n}  \vert \mbox{ supp}(x) \subseteq I_{i}^c\}, \quad i=1,2.
\end{eqnarray*}
From Lemma \ref{sphere}, we have $A_{\overline{I^c_1},
E^*_S}(\mathcal{C}_1^{\perp}) = (q-1)^{|M(I^c_1)|}q^{|(I^c_1)_M|}$.
Note that $|I^c|=|J^c|$ because $I^c \simeq J^c$. If $x \in
\mathcal{C}^{\perp}_2 $ such that $\langle \mbox{ supp}(x)
\rangle_{\mathcal{P}^*} \simeq I^c_1$, then $|\langle \mbox{
supp}(x) \rangle_{\mathcal{P}^*}|=|I^c_1|=|I^c_2|$. It then follows
from $\langle \mbox{ supp}(x) \rangle_{\mathcal{P}^*}\subseteq
I^c_2$ that $I^c_2=\langle \mbox{ supp}(x)
\rangle_{\mathcal{P}^*}\simeq I^c_1$, which is a contradiction to
the fact that $I_1^c \not\simeq I_2^c$. It follows that
$W(\mathcal{C}_1^{\perp}, \mathcal{P^*},E^*_S) \neq
W(\mathcal{C}_2^{\perp}, \mathcal{P^*}, E^*_S)$. Therefore, we have
the result.
\end{proof}

\begin{corollary}
Let $\mathcal{P}$ be a poset on $[n]$ and $H$ a subgroup of
Aut$(\mathcal{P})$. For $\overline{I} \in \mathcal{I}(\mathcal{P}) /
E_H$ and $\overline{J^c} \in \mathcal{I}(\mathcal{P^*}) / E_H^*$, we
have
\begin{eqnarray*}
\frac{|\{\sigma \in H \mid \sigma(J^c) =
J^c\}|}{(q-1)^{|M(J^c)|}q^{|(J^c)_M|}}{p_{\overline{J^c},
\overline{I}}} = \frac{|\{\sigma \in H \mid \sigma(I) =
I\}|}{(q-1)^{|M(I)|}q^{|I_M|}} {q_{\overline{I},\overline{J^c}}}.
\nonumber
\end{eqnarray*}
\end{corollary}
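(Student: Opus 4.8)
The plan is to obtain this as a direct consequence of Proposition \ref{relation of p and q}, replacing the equivalence-class sizes $|\overline{I}|$ and $|\overline{J^c}|$ by stabilizer indices via the orbit--stabilizer theorem. First I would invoke Theorem \ref{automorphism group}$(i)$, which tells us that $E_H$ is a MacWilliams-type equivalence relation on $\mathcal{I}(\mathcal{P})$; hence Proposition \ref{relation of p and q} applies and gives
\begin{eqnarray*}
\frac{|\overline{I}|}{(q-1)^{|M(J^c)|}q^{|(J^c)_M|}}{p_{\overline{J^c}, \overline{I}}} = \frac{|\overline{J^c}|}{(q-1)^{|M(I)|}q^{|I_M|}} {q_{\overline{I},\overline{J^c}}}
\end{eqnarray*}
for all $\overline{I} \in \mathcal{I}(\mathcal{P})/E_H$ and $\overline{J^c} \in \mathcal{I}(\mathcal{P^*})/E_H^*$.

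Next I would identify the two class sizes with indices of point stabilizers. By the definition of $E_H$, the class $\overline{I}$ is exactly the orbit $\{\sigma(I)\mid\sigma\in H\}$ of $I$ under the natural action of $H$ on $\mathcal{I}(\mathcal{P})$, so the orbit--stabilizer theorem yields $|\overline{I}| = |H|/|\{\sigma\in H\mid\sigma(I)=I\}|$. For the dual side I would recall, as in the proof of Theorem \ref{automorphism group}$(i)$, that $\textrm{Aut}(\mathcal{P}^*)=\textrm{Aut}(\mathcal{P})$ and that $(I^c,J^c)\in E_H^*$ if and only if $\sigma(I^c)=J^c$ for some $\sigma\in H$; consequently $\overline{J^c}$ is precisely the $H$-orbit of $J^c$ in $\mathcal{I}(\mathcal{P}^*)$, and again by orbit--stabilizer $|\overline{J^c}| = |H|/|\{\sigma\in H\mid\sigma(J^c)=J^c\}|$.

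Finally I would substitute these two expressions into the displayed identity and multiply both sides by $|\{\sigma\in H\mid\sigma(I)=I\}|\cdot|\{\sigma\in H\mid\sigma(J^c)=J^c\}|/|H|$; this cancels the common factor $|H|$ and converts the $|\overline{I}|$ in the numerator on the left into $|\{\sigma\in H\mid\sigma(J^c)=J^c\}|$ and the $|\overline{J^c}|$ on the right into $|\{\sigma\in H\mid\sigma(I)=I\}|$, which is exactly the asserted equality. No step here is genuinely difficult; the only point needing care is the bookkeeping on the dual side, namely confirming that the $E_H^*$-class of $J^c$ really is an orbit of the same group $H$ acting on $\mathcal{I}(\mathcal{P}^*)$ (which is where the identification $\textrm{Aut}(\mathcal{P}^*)=\textrm{Aut}(\mathcal{P})$ enters) rather than an orbit of some unrelated group.
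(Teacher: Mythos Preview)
Your proposal is correct and follows essentially the same route as the paper: invoke Theorem~\ref{automorphism group}$(i)$ so that Proposition~\ref{relation of p and q} applies, then replace $|\overline{I}|$ and $|\overline{J^c}|$ by $|H|/|\{\sigma\in H\mid\sigma(I)=I\}|$ and $|H|/|\{\sigma\in H\mid\sigma(J^c)=J^c\}|$ via orbit--stabilizer and cancel the common factor $|H|$. Your extra remark justifying that the $E_H^*$-class of $J^c$ is an $H$-orbit (using $\textrm{Aut}(\mathcal{P}^*)=\textrm{Aut}(\mathcal{P})$) makes explicit a point the paper leaves implicit.
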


\begin{proof}
From Theorem \ref{automorphism group}, the equivalence relation
$E_H$ on $\mathcal{I}(\mathcal{P})$ is a MacWilliams-type. It
follows from Proposition \ref{relation of p and q} that
\begin{eqnarray*}
\frac{|\overline{I}|}{(q-1)^{|M(J^c)|}q^{|(J^c)_M|}}{p_{\overline{J^c},
\overline{I}}} = \frac{|\overline{J^c}|}{(q-1)^{|M(I)|}q^{|I_M|}}
{q_{\overline{I},\overline{J^c}}}, \nonumber
\end{eqnarray*}
for $\overline{I} \in \mathcal{I}(\mathcal{P}) / E_H$ and
$\overline{J^c} \in \mathcal{I}(\mathcal{P^*}) / E_H^*$. Since $H =
|\overline{I}| |\{\sigma \in H \mid \sigma(I) = I\}| =
|\overline{J^c}| |\{\sigma \in H \mid \sigma(J^c) = J^c\}|$, the
result follows.
\end{proof}

%% The Appendices part is started with the command \appendix;
%% appendix sections are then done as normal sections
%% \appendix

%% \section{}
%% \label{}

%% References
%%
%% Following citation commands can be used in the body text:
%% Usage of \cite is as follows:
%%   \cite{key}          ==>>  [#]
%%   \cite[chap. 2]{key} ==>>  [#, chap. 2]
%%   \citet{key}         ==>>  Author [#]

%% References with bibTeX database:

\bibliographystyle{model1b-num-names}
\bibliography{<your-bib-database>}

%% Authors are advised to submit their bibtex database files. They are
%% requested to list a bibtex style file in the manuscript if they do
%% not want to use model1b-num-names.bst.

%% References without bibTeX database:

% \begin{thebibliography}{00}

%% \bibitem must have the following form:
%%   \bibitem{key}...
%%

% \bibitem{}

% \end{thebibliography}

\end{document}